\title{Finite knot surgeries and Heegaard Floer homology}
\author[M. Doig]{Margaret I. Doig}
\address{Department of Mathematics\\
Syracuse University\\
215 Carnegie Building\\
Syracuse, NY 13244-1150}
\email{midoig@syr.edu}
\newtheorem{thm}{Theorem}
\newtheorem{cor}[thm]{Corollary}
\newtheorem{conj}[thm]{Conjecture}
\theoremstyle{definition}
\newtheorem{ex}[thm]{Example}
\newcommand{\surg}{S^3_{p/q}}
\newcommand{\Zed}{\mathbb{Z}}
\newcommand{\Tnaught}{\mathcal{T}^+_0}
\newcommand{\spinc}{\mathrm{Spin^c}}
\newcommand{\spincstr}{\mathfrak{t}}
\newcommand{\spincstrextend}{\mathfrak{s}}
\newcommand{\Char}{\mathrm{Char}}
\newcommand{\weight}{\mathrm{m}}
\newcommand{\hf}{\widehat{HF}}
\begin{document}

\begin{abstract}
It is well known that any 3-manifold can be obtained by Dehn surgery on a link but not which ones can be obtained from a knot or which knots can produce them. We investigate these two questions for elliptic Seifert fibered spaces (other than lens spaces) using the Heegaard Floer \emph{correction terms} or \emph{d-invariants} associated to a 3-manifold $Y$ and its torsion $\spinc$ structures. For $\pi_1(Y)$ finite and $|H_1(Y)|\leq 4$, we classify the manifolds which are knot surgery and the knot surgeries which give them; for $|H_1(Y)|\leq 32$, we classify the manifolds which are surgery and place restrictions on the surgeries which may give them.
\end{abstract}

\maketitle

\section{Introduction}\label{sectintro}

In the 1960s, Wallace \cite{wallace} and Lickorish \cite{lickorish} showed that any oriented 3-manifold can be constructed by \emph{Dehn surgery}\footnote{To perform \emph{p/q-[Dehn] surgery} on a knot $K$ embedded in $S^3$, remove an open neighborhood $N(K)$ homeomorphic to a solid torus and replace it by identifying a meridian of the solid torus with $p\mu + q\lambda$ in the knot complement. Here, $\mu$ and $\lambda$ are oriented curves on $\partial \overline{N(K)}$ where $\mu$ bounds a disk in $\overline{N(K)}$; $\lambda$ is null-homologous in $H_1(S^3-N(K))$; and the geometric intersection number of $\mu$ and $\lambda$ is $+1$.}  on a link in $S^3$. Soon after, Moser asked which manifolds can be constructed by surgery on a knot \cite{moser}. One may also ask which knots give each manifold. We begin to answer these two questions for elliptic (or spherical) manifolds other than lens spaces, that is, those with finite but non-cyclic fundamental group.

We know that $S^3$ only comes from trivial surgeries~\cite{gordonlueckecomplements}, and $S^1\times S^2$ arises only from 0-surgery on the unknot \cite{gabaifoliations}. On the other hand, lens spaces can come from torus knots \cite{moser} but may also arise from integral surgery on some hyperbolic knots \cite{culetal}. Berge \cite{bergeconj} proposed a comprehensive list of such surgeries using \emph{primitive/primitive} knots, which is now referred to as the Berge Conjecture and is listed as Problem 1.78~\cite{kirby}. Ozsv\'ath and Szab\'o \cite{ozszlens} gave a necessary condition on the Alexander polynomial of a knot with a lens space surgery and verified Berge's list up to $p\leq 1500$, and Greene \cite{greeneberge} verified that any lens space which is surgery on a non-trivial knot is achieved by some knot on the list (he did not verify that all knots giving lens spaces are on the list). Dean \cite{dean} proposed an extension of these results from lens spaces to small Seifert fibered spaces. However, Dean's list is not exhaustive: other hyperbolic surgeries also produce small Seifert fibered manifolds, but all such known manifolds are also given by knots from Dean's list \cite{dmm, mmm}. 

We address a subset of this case, elliptic (or spherical) manifolds other than the lens spaces, i.e., Seifert fibered manifolds with finite but non-cyclic fundamental group. If a surgery gives such a group, we will call it a \emph{finite} and \emph{non-cyclic} surgery.

The finite surgeries on torus knots are easy to identify based on Moser's classification~\cite[Propositions~3.1,~3.2,~4]{moser}. Bleiler and Hodgson explicitly listed the finite surgeries on iterated torus knots~\cite[Theorem~7]{bleilerhodgson2} based on Gordon's classification~\cite[Theorem~7.5]{gordonsatellite}; all of the resulting manifolds are also torus knot surgeries. Boyer and Zhang proved that no other satellite knots have finite surgeries~\cite[Corollary~1.4]{boyerzhang}.
 
Boyer and Zhang showed that all finite surgeries on hyperbolic knots are integral or half-integral, although it is conjectured that they are integral (see, e.g.,~\cite[Problem 177, Conjecture A]{kirby}). Additionally, any hyperbolic knot has at most five finite or cyclic surgeries, with at most one non-integral. Any two such surgeries on the same knot have distance\footnote{A surgery coefficient $p/q$ corresponds to a homology class $p\mu+q\lambda$ on $\partial \overline{N(K)}$. The \emph{distance} between two surgery coefficients is the minimum geometric intersection number of two curves representing the corresponding homology classes.} at most 3, and the distance 3 is realized by at most one pair~\cite[Theorems~1.1,~1.2]{boyerzhangii}. 

There are a variety of examples of finite surgeries on hyperbolic knots. Fintushel and Stern~\cite{fintushelsternlens} and Bleiler and Hodgson~\cite{bleilerhodgson2} commented respectively that $17$-surgery on $(-2,3,7)$ pretzel knot and $22$- and $23$-surgery on the $(-2,3,9)$ pretzel knot are finite (although all three resulting manifolds are also torus knot surgeries), and Mattman et al. showed that there are no other finite surgeries on pretzel knots ~\cite[Theorem~1.2]{mattmanpretzelfinite}, \cite[Theorem~1]{mattmanetalpretzel}. It is an interesting question for which $p$ there are finite $p/q$-surgeries on hyperbolic knots. As Zhang stated in Conjecture~$\hat{I}$ ~\cite{zhangpropi} and Kirby formulated in a remark after Problem~3.6(D) \cite{kirby}, the Poincar\'e homology sphere (the only manifold with finite $\pi_1Y$ and $|H_1(Y)|=1$) has a unique surgery construction. Ghiggini proved:

\begin{thm}\cite[Corollary 1.7]{ghigginigenusonefibered}\label{thmz1}
The Poincar\'e homology sphere is $-1$-surgery on the left-handed trefoil (or, reversing orientation, $+1$-surgery on the right-handed trefoil)
\[\Sigma(2,3,5) = S^3_{-1}(T_{3,-2})\]
and no other surgery on any knot.\footnote{Throughout this paper, we suppress the choice of orientations; unless otherwise stated, surgery coefficients are positive and Seifert fibered descriptions are the canonical ones described in Theorem~\ref{thmh1order}.}
\end{thm}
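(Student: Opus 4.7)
The plan is to combine $|H_1|$ considerations, the d-invariant, and the L-space structure of $\Sigma(2,3,5)$ to constrain any surgery producing it, then appeal to knot Floer detection results to identify the knot. Since $|H_1(\Sigma(2,3,5))| = 1$, any surgery $S^3_{p/q}(K) \cong \Sigma(2,3,5)$ must satisfy $|p| = 1$, so the slope is an integer $\pm 1$; after passing to the mirror if necessary I may assume the slope is $-1$. One direction of the theorem is classical: by Moser's description of torus-knot surgeries, $S^3_{-1}(T_{3,-2}) = \Sigma(2,3,5)$.

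For uniqueness, I would next use the d-invariant $d(\Sigma(2,3,5)) = -2$ (with the appropriate orientation) together with Ozsv\'ath--Szab\'o's surgery formula for $d(S^3_{-1}(K))$. This relates $d$ to the invariants $V_k(K)$ coming from the knot Floer complex $CFK^\infty(K)$; matching values forces $\tau(K) \geq 1$ and pins down $V_0(K)$. Since $\Sigma(2,3,5)$ is an L-space, $\mathrm{rk}\,\hf(\Sigma(2,3,5)) = |H_1| = 1$, and the surgery long exact sequence for $\pm 1$-surgery then forces the total rank of $\widehat{HFK}(K)$ to be as small as possible and its support to lie in a narrow range of Alexander gradings. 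Together with the $\tau$ constraint, this pins the Alexander polynomial down to $t - 1 + t^{-1}$ and $\widehat{HFK}(K)$ to agree with that of the left-handed trefoil.

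The final and hardest step is to lift this Floer-theoretic coincidence to an honest identification $K = T_{3,-2}$. I would invoke Ni's theorem that knot Floer homology detects fiberedness, which combined with the knot Floer constraints above forces $K$ to be a genus-one fibered knot in $S^3$. The Burde--Zieschang classification leaves only the two trefoils and the figure-eight; the figure-eight is excluded because its finite surgeries are classical and do not produce $\Sigma(2,3,5)$, while the right-handed trefoil yields $\Sigma(2,3,5)$ only in the mirror case. I expect the fiberedness detection to be the main obstacle: the d-invariant calculation only constrains the Heegaard Floer data of $K$, whereas recovering $K$ itself requires the substantially deeper machinery at the heart of Ghiggini's paper.
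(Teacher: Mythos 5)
Your overall skeleton matches the paper's: use $|H_1|$ to constrain $p$, use L-space/knot Floer theory to pin down the Alexander polynomial and genus, invoke fiberedness detection plus the Burde--Zieschang classification of genus-one fibered knots, and finish with an orientation check. However, there is one genuine gap at the very first step: from $|H_1(S^3_{p/q}(K))| = |p| = 1$ you conclude that ``the slope is an integer $\pm 1$,'' but $|p|=1$ only tells you the slope is $\pm 1/q$ for some $q \geq 1$; it does not force $q=1$. Nothing later in your argument excludes, say, $S^3_{-1/5}(K) \cong \Sigma(2,3,5)$. The paper closes exactly this hole with the L-space surgery bound $p/q \geq 2g(K)-1$ (Theorem~\ref{thmsurgcoeffform}): since $\Sigma(2,3,5)$ is not a lens space, $K$ is not the unknot, so $g(K) \geq 1$, and then $1/q \geq 2g(K)-1 \geq 1$ forces $q=1$ and $g(K)=1$ simultaneously. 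You have this tool implicitly available (you use L-space structure later), but as written the integrality of the slope is asserted, not proved, and the fix has to come \emph{before} you start analyzing $\pm 1$-surgery.

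On the middle step your route genuinely differs from the paper's, and is heavier than necessary. The paper never computes $d(\Sigma(2,3,5))$ or any $V_k(K)$ for this theorem: once $q=1$ and $g(K)=1$ are known, Theorem~\ref{thmalex} (alternating $\pm1$ coefficients for L-space knots, with $\deg\Delta_K = g(K)$) immediately gives $\Delta_K(T) = T - 1 + T^{-1}$, and Corollary~\ref{thmfibered} gives fiberedness. Your proposed detour through the surgery formula for $d(S^3_{-1}(K))$, $\tau$, $V_0$, and a rank bound from the surgery exact triangle can be made to work, but it re-derives by hand what the lens space surgery theorem already packages; it buys nothing here and adds places to go wrong. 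Your endgame (trefoils plus figure-eight, exclude the figure-eight and the wrong-handed trefoil) is fine, though note the figure-eight is already killed by the Alexander polynomial (or by amphichirality, Corollary~\ref{thmamphichiral}), and the paper excludes the wrong trefoil by observing that $S^3_{1}(T_{3,-2}) = -\left(-1;\frac{1}{2},\frac{1}{3},\frac{1}{8}\right)$ is not even elliptic.
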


Elliptic spaces fall into a group of manifolds called \emph{L-spaces} whose Heegaard Floer homology is particularly simple (Cf \cite[Proposition~2.3]{ozszlens}). If an L-space is given by $p/q$  surgery on a knot $K$ in $S^3$, then it obeys the inequality $p/q\geq 2g(K)-1$, and the knot is fibered with one of a very small set of Alexander polynomials. The \emph{correction terms} or \emph{d-invariants} $d(Y,\spincstr)$ take a very nice form for L-space surgeries, and they can be compared to the $d(Y,\spincstr)$ calculated directly from a plumbing graph. We prove:

\begin{thm} \label{thmz2}
Up to orientation, the only finite, non-cyclic surgeries with \mbox{$p \leq 9$} are: 
\[\begin{array}{rcl}
S^3_1(T_{3,2}) &=& \left(-1; \frac{1}{2}, \frac{1}{3}, \frac{1}{5}\right) \\
S^3_2(T_{3,2}) &=& \left(-1; \frac{1}{2}, \frac{1}{3}, \frac{1}{4}\right) \\
S^3_3(T_{3,2}) &=& \left(-1; \frac{1}{2}, \frac{1}{3}, \frac{1}{3}\right) \\
S^3_4(T_{3,2}) &=& \left(-1; \frac{1}{2}, \frac{1}{2}, \frac{1}{3}\right) \\
S^3_{7/2}(T_{3,2}) ~=~ S^3_7(T_{5,2}) &=& \left(-1; \frac{1}{2}, \frac{1}{3}, \frac{2}{5}\right) \\
-S^3_8(T_{3,2}) &=& \left(-1; \frac{1}{2}, \frac{1}{2}, \frac{2}{3}\right) \\
S^3_8(T_{5,2}) &=& \left(-1; \frac{1}{2}, \frac{1}{2}, \frac{2}{5}\right) \\
S^3_{9/2}(T_{3,2}) ~=~- S^3_9(T_{3,2}) &=& \left(-1; \frac{1}{2}, \frac{1}{3}, \frac{2}{3}\right) \\
\end{array}\] 
With the possible exception of $S^3_7(T_{5,2})$ and $S^3_8(T_{5,2})$, there are no other surgeries (up to orientation) giving these manifolds. 

The following manifolds cannot be realized as any knot surgery:
\[\begin{array}{rl}
\left(-1; \frac{1}{2}, \frac{1}{2}, \frac{1}{n}\right) & if~ n\neq 3 \phantom{,5}\\
\left(-1; \frac{1}{2}, \frac{1}{2}, \frac{2}{n}\right) & if~ n\neq 3~or~5
\end{array}\]
\end{thm}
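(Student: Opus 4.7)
The plan is to use the Heegaard Floer d-invariants as an obstruction to knot surgery, combined with the structural constraints that Ozsv\'ath and Szab\'o place on L-space knots, to rule out impossible surgeries and to confirm those that remain. First, I would enumerate every elliptic Seifert fibered manifold $Y$ with $|H_1(Y)| = p \leq 9$ that is not a lens space. Such a $Y$ is determined by its three Seifert invariants in the canonical form $(-1; r_1, r_2, r_3)$, and the numerical constraints on the $r_i$ together with $p \leq 9$ leave only a finite list. For each $Y$ I would choose a negative-definite plumbing representative and apply the Ozsv\'ath-Szab\'o plumbing algorithm: for each $\spinc$ equivalence class of characteristic vectors $K \in \Char$, minimize $(K^2 + s)/4$, where $s$ is the number of vertices of the plumbing. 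This yields a finite table of correction terms $d(Y, \spincstr)$ over all torsion $\spinc$ structures on every candidate $Y$.

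Next, I would invoke the L-space surgery formula. Because every elliptic space is an L-space, if $Y \cong \surg(K)$ then $p/q \geq 2g(K)-1$, $K$ is fibered, and $\Delta_K(t)$ is symmetric with nonzero coefficients $\pm 1$ alternating in sign. Moreover, the Ozsv\'ath-Szab\'o and Ni-Wu surgery formula writes
\[d(\surg(K), \spincstr) \;=\; d(L(p,q), \spincstr) \;-\; 2\, V_i(K),\]
under an explicit correspondence of $\spinc$ structures, where the non-negative integers $V_i$ are built from the torsion coefficients of $\Delta_K$. Inverting this, the tabulated d-invariants of $Y$ determine the putative sequence $\{V_i\}$, and hence the putative $\Delta_K$. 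The Boyer-Zhang distance bounds together with $p \leq 9$ and $p/q \geq 2g-1$ restrict the $(p,q)$ pairs to be checked to a finite collection.

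Third, for each candidate polynomial I would test admissibility: the reconstructed $V_i$ must be non-negative and non-increasing, and the polynomial must carry the alternating-sign pattern of an L-space knot. Whenever any of these conditions fails, $Y$ cannot be any knot surgery, which is the outcome needed for the infinite families $(-1; \tfrac12, \tfrac12, \tfrac1n)$ and $(-1; \tfrac12, \tfrac12, \tfrac2n)$. If admissibility is satisfied, I would compare $\Delta_K$ with the Alexander polynomials of the torus knots $T_{3,2}$ and $T_{5,2}$, whose finite surgeries are completely classified by Moser, and check that the surgery coefficient matches. This closes both directions simultaneously: confirming the eight listed surgeries and excluding the rest.

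The main obstacle will be the bookkeeping on $\spinc$ structures: pinning down the identification between torsion $\spinc$ structures on the plumbing side of $Y$ and on the surgery side of $\surg(K)$, because a sign or labeling error there would invalidate every subsequent comparison. A secondary subtlety is treating the two infinite families uniformly, where the argument must show that for every $n$ outside the listed exceptions no admissible $V_i$ sequence matches the d-invariants; this is precisely where the monotonicity and alternating-sign constraints on L-space-knot polynomials do the real work. Once these pieces are in place, the rest is a finite and largely algorithmic case analysis.
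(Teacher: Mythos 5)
Your strategy is essentially the one the paper uses: enumerate the candidate elliptic manifolds with $|H_1(Y)|\leq 9$ from Seifert's classification, compute their correction terms from negative-definite plumbings, constrain the knot by the $L$-space conditions (fibered, alternating $\pm1$ Alexander coefficients, $p/q\geq 2g(K)-1$), compute the correction terms of each putative surgery from the Alexander polynomial via the surgery formula, and compare. Your $V_i$ formulation is equivalent to the $\sum_j j\,a_{c+j}$ formula the paper uses, and your treatment of the infinite dihedral families is the same mechanism (the $d$-invariants of $\left(-1;\frac{1}{2},\frac{1}{2},\frac{m}{n}\right)$ vary linearly in $n$ while the surgery side is a finite list, so all but finitely many $n$ are excluded).

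There is, however, one genuine gap: the final step, ``compare $\Delta_K$ with the Alexander polynomials of $T_{3,2}$ and $T_{5,2}$\dots this closes both directions,'' does not close the uniqueness direction. An Alexander polynomial match identifies the knot only in genus one, where the trefoils are the only fibered knots with $\Delta_K=T-1+T^{-1}$. In the cases $S^3_7(T_{5,2})$ and $S^3_8(T_{5,2})$ the surviving polynomial is $T^2-T+1-T^{-1}+T^{-2}$, and there are infinitely many fibered genus-two knots with the correct formal properties; nothing in your argument excludes some other such knot $K$ realizing the same surgery. The paper closes this with Ni and Zhang's result that if $S^3_{p/q}(K)=S^3_{p/q}(T_{5,2})$ is an $L$-space then $K=T_{5,2}$; you need that input (or an equivalent one) to obtain the stated uniqueness. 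Two smaller points to make explicit: the reduction to $p/q>0$ via $S^3_{-p/q}(K)=-S^3_{p/q}(mK)$ together with the fact that no knot has both positive and negative $L$-space surgeries, and the bound on $q$ for non-integral slopes, which for $p=7,9$ comes from Boyer--Zhang's theorem that finite surgeries on hyperbolic knots are integral or half-integral rather than from the genus inequality alone.
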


Note that there are no elliptic Seifert fibered spaces with $|H_1(Y)| =5$ or $6$; there are unique spaces for each of $|H_1(Y)|=1, 2, 3, 7,$ and $9$; and there are infinite families for both $|H_1(Y)|=4$ and $8$. See Theorem~\ref{thmh1order} below, due to Seifert.

\begin{cor}\label{thmexceptional}
Any finite, non-cyclic surgery on a hyperbolic knot has surgery coefficient at least 7.
\end{cor}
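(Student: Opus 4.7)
The plan is to derive the corollary directly from the enumeration in Theorem~\ref{thmz2}, supplemented by Theorem~\ref{thmz1} for the $|H_1(Y)|=1$ case. The driving observation is that every knot appearing in either statement is a torus knot, and no torus knot is hyperbolic.

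Concretely, suppose $K$ is a hyperbolic knot and $S^3_{p/q}(K)$ is finite and non-cyclic with $p\leq 9$. If $|H_1|=1$, then the resulting manifold is the Poincar\'e homology sphere, and Theorem~\ref{thmz1} forces $K = T_{3,-2}$, contradicting hyperbolicity. If $2 \leq |H_1| \leq 9$, then the resulting manifold is one of the finite, non-cyclic Seifert fibered spaces covered by Theorem~\ref{thmz2}, whose final assertion is precisely that the only surgeries realizing any such manifold appear in the displayed list. Inspecting that list, every such surgery is performed on either $T_{3,2}$ or $T_{5,2}$, again contradicting hyperbolicity. The case $p=0$ is excluded because $0$-surgery on a knot yields $S^1\times S^2$ only for the unknot (Gabai), and $S^1\times S^2$ has infinite fundamental group in any event. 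Hence no hyperbolic knot admits a finite, non-cyclic surgery with $p\leq 9$.

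The corollary therefore requires no new argument beyond reading off Theorems~\ref{thmz1} and~\ref{thmz2}; the genuine content lies in establishing those two classifications via the $d$-invariant computations outlined in the introduction. The only place where one might slip is in verifying that Theorem~\ref{thmz2} genuinely classifies \emph{every} knot surgery producing such a manifold, rather than merely the homeomorphism types of the manifolds themselves; this is made explicit by the clause stating that there are no other surgeries up to orientation giving each listed manifold.
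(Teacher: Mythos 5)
Your argument is correct and is exactly the paper's (implicit) justification: Corollary~\ref{thmexceptional} is stated without proof as an immediate consequence of Theorem~\ref{thmz2}, since every surgery appearing in that classification (and in Theorem~\ref{thmz1}) is on $T_{3,2}$ or $T_{5,2}$, neither of which is hyperbolic. The only caveat---shared with the paper's own statement---is that Theorem~\ref{thmz2} bounds the numerator $p$ rather than the coefficient $p/q$, so for a half-integral surgery the argument literally yields $p\geq 10$ rather than $p/q\geq 10$; reading ``surgery coefficient at least $10$'' as $p\geq 10$ (equivalently $|H_1|\geq 10$), your derivation needs nothing further.
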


Any Seifert fibered spaces which are not knot surgeries must be found among the dihedral manifolds, those with $|H_1(Y)|$ a multiple of 4 (see Corollary~\ref{thmh1order}). We will prove:

\begin{thm}\label{thmz32}
The following manifolds have unique surgery descriptions:
\[\begin{array}{rcl}
S^3_4\left(T_{3,2}\right) & = & \left(-1; \frac{1}{2}, \frac{1}{2}, \frac{1}{3}\right) \\
-S^3_8\left(T_{3,2}\right) & = & \left(-1; \frac{1}{2}, \frac{1}{2}, \frac{2}{3}\right)\\
S^3_{16/3}\left(T_{3,2}\right) & = & \left(-1; \frac{1}{2}, \frac{1}{2}, \frac{4}{3}\right) \\
-S^3_{20/3}\left(T_{3,2}\right) & = & \left(-1; \frac{1}{2}, \frac{1}{2}, \frac{5}{3}\right)\\
S^3_{28/5}\left(T_{3,2}\right) & = & \left(-1; \frac{1}{2}, \frac{1}{2}, \frac{7}{3}\right) \\
-S^3_{32/5}\left(T_{3,2}\right) & = & \left(-1; \frac{1}{2}, \frac{1}{2}, \frac{8}{3}\right) \\
-S^3_{32/3}\left(T_{5,2}\right) & = & \left(-1; \frac{1}{2}, \frac{1}{2}, \frac{8}{5}\right) \\
\end{array}\]

The only other dihedral manifolds with $p\leq32$ which may be surgery are:
\[\begin{array}{rcl c rcl}
S^3_8\left(T_{5,2}\right) & = & \left(-1; \frac{1}{2}, \frac{1}{2}, \frac{2}{5}\right) \\
-S^3_{12}\left(T_{5,2}\right) & = & \left(-1; \frac{1}{2}, \frac{1}{2}, \frac{3}{5}\right) \\
S^3_{12}\left(T_{7,2}\right) & = & \left(-1; \frac{1}{2}, \frac{1}{2}, \frac{3}{7}\right) \\
-S^3_{16}\left(T_{7,2}\right) & = & \left(-1; \frac{1}{2}, \frac{1}{2}, \frac{4}{7}\right) \\
S^3_{16}\left(T_{9,2}\right) & = & \left(-1; \frac{1}{2}, \frac{1}{2}, \frac{4}{9}\right)  \\
-S^3_{20}\left(T_{9,2}\right) & = & \left(-1; \frac{1}{2}, \frac{1}{2}, \frac{5}{9}\right)  \\
S^3_{20}\left(T_{11,2}\right) & = & \left(-1; \frac{1}{2}, \frac{1}{2}, \frac{5}{11}\right)  \\
-S^3_{24}\left(T_{11,2}\right) & = & \left(-1; \frac{1}{2}, \frac{1}{2}, \frac{6}{11}\right) \\
S^3_{24}\left(T_{13,2}\right) & = &  \left(-1; \frac{1}{2}, \frac{1}{2}, \frac{6}{13}\right) \\
S^3_{28/3}\left(T_{5,2}\right) = S^3_{28}\left(K_0\right)?& = & \left(-1; \frac{1}{2}, \frac{1}{2}, \frac{7}{5}\right)\\
S^3_{28}\left(K_1\right) & = & \left(-1; \frac{1}{2}, \frac{1}{2}, \frac{7}{11}\right)\\
-S^3_{28}\left(T_{13,2}\right) & = & \left(-1; \frac{1}{2}, \frac{1}{2}, \frac{7}{13}\right) \\
S^3_{28}\left(T_{15,2}\right) & = & \left(-1; \frac{1}{2}, \frac{1}{2}, \frac{7}{15}\right) \\
-S^3_{32}\left(T_{15,2}\right) & = & \left(-1; \frac{1}{2}, \frac{1}{2}, \frac{8}{15}\right) \\
S^3_{32}\left(T_{17,2}\right) & = & \left(-1; \frac{1}{2}, \frac{1}{2}, \frac{8}{17}\right) \\
\end{array}\]
The latter manifolds may also be integral surgery on hyperbolic knots with the same $\Delta_K(T)$ as the knots listed above; see Tables~\ref{tablealexpoly} and \ref{tablesurgcorrterms}. 
\end{thm}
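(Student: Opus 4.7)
The plan is to apply the d-invariant strategy of Theorem~\ref{thmz2} to the dihedral Seifert fibered spaces $Y = \left(-1; \tfrac12, \tfrac12, a/b\right)$ with $4 \le |H_1(Y)| \le 32$. By Corollary~\ref{thmh1order}, these are exactly the elliptic candidates left open in this range, since non-dihedral manifolds have $|H_1|$ not divisible by $4$ and are either classical torus knot surgeries or excluded by Theorem~\ref{thmz2}. Because $|H_1(Y)| = 4b$, the bound $|H_1(Y)| \le 32$ gives a finite list of fractions $a/b$, each defining a single plumbed manifold to analyze.

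For each candidate $Y$, I would compute $d(Y, \spincstr)$ at every torsion $\spinc$ structure directly from its plumbing graph. On the other hand, if $Y \cong \surg(K)$ for some knot $K \subset S^3$, then $Y$ is an L-space, the condition $p/q \ge 2g(K)-1$ holds, and the d-invariants satisfy
\[ d(\surg(K), i) \;=\; d(\surg(U), i) \;-\; 2\, t_{|i|}(K), \]
where $t_j(K)$ is the $j$-th torsion coefficient of $\Delta_K(T)$ and $U$ is the unknot. Equating the two expressions across all identifications of $\spinc$ structures produces a system in the $t_j(K)$; its solutions, when they exist, pin down both the surgery slope $p/q$ (up to orientation) and the Alexander polynomial $\Delta_K(T)$.

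The case analysis then splits into three outcomes. First, if the recovered $\Delta_K(T)$ matches that of a torus knot $T_{m,2}$ and the recovered $p/q$ is one of the finite surgery slopes on that knot from Moser's list, then $Y$ is identified as that torus knot surgery. Second, if the system is inconsistent, or forces a polynomial failing the alternating $\pm 1$ coefficient condition of an L-space knot, then $Y$ admits no knot surgery at all. Third, if the system is consistent but admits an additional non-torus solution, then $Y$ may also be realized as a hyperbolic knot surgery whose candidate Alexander polynomial is recorded in Tables~\ref{tablealexpoly} and~\ref{tablesurgcorrterms}.

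The main obstacle lies in separating the ``unique'' block from the ``may be surgery'' block. For the eight manifolds in the unique block I would combine the rigid d-invariant constraint (which forces $\Delta_K(T)$ to coincide with the torus knot polynomial) with Boyer--Zhang's distance and integrality bounds on finite hyperbolic surgeries, together with a genus comparison via $\deg \Delta_K$, to exclude any further hyperbolic realization. For the remaining manifolds in the second block -- including the anomalous $\left(-1; \tfrac12, \tfrac12, \tfrac89\right)$, where no torus knot solution survives and only a putative hyperbolic $\Delta_K(T)$ remains -- the d-invariant method exhausts itself, and one can only catalog the surviving Alexander polynomials without either constructing or excluding a hyperbolic knot realizing them.
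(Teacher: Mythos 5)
Your overall strategy---computing $d(Y,\spincstr)$ from the plumbing graph and matching against the surgery formula of Theorem~\ref{thmd} over the finitely many admissible Alexander polynomials---is the same as the paper's. But there is a concrete error at the outset that breaks your finiteness claim: for $Y=\left(-1;\frac{1}{2},\frac{1}{2},\frac{m}{n}\right)$, Theorem~\ref{thmh1order} gives $|H_1(Y)|=4m$, four times the \emph{numerator}, not the denominator. The bound $|H_1(Y)|\leq 32$ therefore bounds $m\leq 8$ but leaves $n$ unconstrained: each homology class contains an \emph{infinite} family of dihedral manifolds indexed by $n$, not ``a finite list of fractions, each defining a single plumbed manifold to analyze.'' The paper handles this by computing the correction terms symbolically as functions of $n$ (Table~\ref{tableSFScorrterms}: in each family some terms are constant and the rest have the form $\frac{n+\cdot}{4m}$), so that matching against the finite list of candidate surgery $d$-invariants determines which finitely many values of $n$ survive. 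Without this step your case analysis does not terminate, and you cannot conclude that $\left(-1;\frac{1}{2},\frac{1}{2},\frac{m}{n}\right)$ fails to be a surgery for the infinitely many remaining $n$---which is precisely the content of the theorem's negative assertions.

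Two smaller gaps. First, your trichotomy (torus knot / hyperbolic / nothing) omits satellite knots; the paper excludes them via the Boyer--Zhang result on satellites and the Bleiler--Hodgson list of finite satellite surgeries, and this exclusion is needed for every uniqueness claim. Second, your uniqueness argument for the first block is underpowered: forcing $\Delta_K(T)$ to equal a torus knot polynomial does not identify $K$ as that torus knot. The paper's actual argument is that $p=4m\equiv 0\pmod 4$ rules out half-integral surgery, so by Boyer--Zhang any hyperbolic realization would be integral, and the manifolds in the unique block match no integral-surgery $d$-invariant profile; the low-order cases additionally require the classification of genus-one fibered knots and the Ni--Zhang characterization of $L$-space surgeries shared with $T_{5,2}$. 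A ``genus comparison via $\deg\Delta_K$'' does not substitute for these inputs.
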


Note: $K_1$ is the knot constructed by $+1$ surgery on the unknotted component of the $(-2, 3, 10)$ pretzel link \cite[Proposition~18]{bleilerhodgson2}. $K_0$ may be some knot with symmetrized Alexander polynomial $T^8-T^7+T^5-T^4+T^2-T+1 \cdots$; the author is not currently aware of any such $K_0$ with the listed surgery.

\begin{cor}
If $m\leq 8$, the following manifolds cannot be realized as knot surgeries: 
\[\begin{array}{rl}
\left(-1; \frac{1}{2}, \frac{1}{2}, \frac{m}{n}\right) & if~n > 2m+1.
\end{array}\]
\end{cor}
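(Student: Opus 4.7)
The plan is to obtain this corollary as a direct consequence of the classifications in Theorems~\ref{thmz2} and~\ref{thmz32}; essentially no new argument is required beyond a homology count and a table inspection.

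First I would compute the order of $H_1(Y)$ for $Y=(-1;\tfrac{1}{2},\tfrac{1}{2},\tfrac{m}{n})$. The standard formula for the order of $H_1$ of a Seifert fibered space gives
\[
|H_1(Y)| \;=\; \bigl|{-1}\cdot 2\cdot 2\cdot n + 1\cdot 2\cdot n + 1\cdot 2\cdot n + m\cdot 2\cdot 2\bigr| \;=\; 4m,
\]
so whenever $m\leq 8$ we have $|H_1(Y)|\leq 32$, placing $Y$ in the range covered by Theorem~\ref{thmz2} (when $m=1$) or by Theorem~\ref{thmz32} (when $2\leq m\leq 8$). Together these two theorems list every dihedral Seifert fibered space of this homological size that can possibly arise as a knot surgery in $S^3$.

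Next, I would read off, for each fixed $m\in\{1,\ldots,8\}$, exactly which denominators $n$ appear in an entry of the form $(-1;\tfrac{1}{2},\tfrac{1}{2},\tfrac{m}{n})$ in those two lists. A direct case-by-case inspection shows that the maximum such $n$ is always $2m+1$: for $m=1$ only $n=3$ is allowed; for $m=2$ the allowed $n$ are $\{3,5\}$; for $m=3$ they are $\{5,7\}$; for $m=4$ they are $\{3,7,9\}$; for $m=5$ they are $\{3,9,11\}$; for $m=6$ they are $\{11,13\}$; for $m=7$ they are $\{3,11,13,15\}$; and for $m=8$ they are $\{3,5,9,15,17\}$. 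Hence any coprime pair $(m,n)$ with $n>2m+1$ gives a manifold that does not appear anywhere in either list.

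Because Theorems~\ref{thmz2} and~\ref{thmz32} are exhaustive in this homological range, absence from both lists means the manifold cannot be realized as a knot surgery, which is exactly the content of the corollary. The only step that requires any real care is the homology computation itself, where one must be consistent about sign conventions for the Seifert invariants; the remainder is pure bookkeeping against tables already established in the paper, and I do not expect any genuine obstacle.
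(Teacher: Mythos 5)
Your proposal is correct and is exactly the argument the paper intends: the corollary is a direct consequence of the exhaustive lists in Theorems~\ref{thmz2} and~\ref{thmz32}, together with the computation $|H_1(Y)|=4m$ from Theorem~\ref{thmh1order}. One trivial bookkeeping slip: for $m=7$ the allowed denominators also include $n=5$ (from $S^3_{28/3}(T_{5,2})=(-1;\frac{1}{2},\frac{1}{2},\frac{7}{5})$), but since $5<2m+1$ this does not affect the conclusion.
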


We describe Seifert fibered spaces and their non-hyperbolic surgeries in Section~\ref{sectexamples}, we list the necessary prerequisites about  L-space surgeries and the $d$-invariants in Section~\ref{sectd}, and we prove Theorems~\ref{thmz2} and \ref{thmz32} in Section~\ref{sectresults}.

The first presentation of this work may be found in the author's thesis~\cite{doigthesis}. Special thanks go to Zolt\'an Szab\'o, who suggested and directed this project with a great deal of kindness and patience, and to Chuck Livingston, Paul Kirk, and Dave Gabai, who listened and contributed many helpful suggestions. Thanks also to Stephen Maderak for turning my algorithms into functional code and making it possible to generate gigabytes of examples.

\section{Seifert fibered spaces as knot surgeries}\label{sectexamples}

Any closed, oriented 3-manifold $Y$ is surgery on some link in $S^3$ \cite{lickorish,wallace}. A surgery diagram can be manipulated by the methods of Kirby calculus \cite{kirbycalculus}, which alter the diagram but not the diffeomorphism type of the underlying 3-manifold: isotopy by surgery diagrams; stabilizing or destabilizing the manifold by adding or subtracting  a $\pm1$-framed unknot which can be separated from the rest of the link; and handlesliding one link component over another, replacing $L_2$ with the band sum of $L_1$ and $L_2$. For the last, if $n_i$ is the framing on $L_i$, then $n_2$ becomes $n_1+n_2+2 lk(L_1, L_2).$ For a presentation of Kirby calculus, including its applications to Dehn surgery, see, e.g., \cite[Chapter 5]{gompfstipsicz}. 

\paragraph{Seifert fibered spaces.}

Seifert fibered spaces were originally defined by Seifert in 1932 (\cite{seifert}, translated by W. Heil in \cite{seifertthrelfall}). Scott gives a more modern presentation with a slightly expanded definition incorporating the fibered solid Klein bottles mentioned below \cite{scott}.

A trivial solid torus $\{z\in \mathbb{C} : |z| \leq 1\}\times S^1$ may be given the product fibration with fibers $\{z\}\times S^1$. A \emph{fibered solid torus} (or \emph{fibered solid Klein bottle}) is a torus (or Klein bottle) which is finitely covered by the trivial fibered torus where the covering map preserves fibers. A fibered torus can alternately be constructed by taking the trivial fibered torus, cutting it along $\{z\in \mathbb{C} : |z| \leq 1\} \times \{0\}$, and identifying $(z,0)$ with $(e^{2\pi iq/p}z, 1)$, and a fibered solid Klein bottle can be constructed by taking the same cut fibered torus and identifying $(z,0)$ with $(\bar{z},1)$. The torus then has one \emph{exceptional} (not regular) fiber in the center, and the Klein bottle has a continuous family of exceptional fibers whose union is an annulus.

 A \emph{Seifert fibered space} is a manifold foliated by circles so that any circle has a neighborhood which is fiber isomorphic to a fibered solid torus or Klein bottle. A Seifert fibered space itself can be thought of as a fiber bundle over the orbifold obtained by compressing each fiber to a point (often called the \emph{base orbifold}). Each isolated exceptional fiber corresponds to a cone point on the orbifold and a surface of exceptional fibers corresponds to a reflector line in the orbifold. Each isolated exceptional fiber can be eliminated by some Dehn surgery, and the class of such surgery coefficients is referred to as the fiber's \emph{framing}. 

For our purposes, we will need only Seifert fibered spaces with base orbifold $S^2$ and some number of cone points. Construct such a space by choosing a circle bundle $\zeta$ over $S^2$ and surgering over fibers with framings $-b_i/a_i$ (the negative sign is for historical reasons). It can be described as surgery on a link in $S^3$ whose components have framing  \mbox{$\{b=c_1(\zeta), -b_1/a_1,\dots,-b_r/a_r\}$}. Seifert identified such a manifold with an $n$-tuple (together with information about the base orbifold which we will exclude):
\[\left(b; \frac{a_1}{b_1}, \dots, \frac{a_r}{b_r}\right)\]
For example, the Poincar\'e homology sphere is $-(-1; \frac{1}{2},\frac{1}{3},\frac{1}{5})$. 

The choice of framings is not unique. The $b_i$, sometimes called the \emph{multiplicities}, are determined, but $b$ and the $a_i$ may be altered by handleslides. For example,
\begin{eqnarray}\left(-b; \frac{a_1}{b_1}, \frac{a_2}{b_2},\frac{a_3}{b_3}\right) \cong \left(-b-1; \frac{a_1}{b_1}+1,\frac{a_2}{b_2},\frac{a_3}{b_3}\right)\label{eqnneum1}\end{eqnarray}
\begin{eqnarray}\left(-1;\frac{1}{2},\frac{a_2}{b_2},\frac{a_3}{b_3}\right) \cong -\left(-2;1-\frac{1}{2},1-\frac{a_2}{b_2},1-\frac{a_3}{b_3}\right).\label{eqnneum2}\end{eqnarray}

By geometrization \cite{perelman}, the manifolds with finite fundamental group are all Seifert fibered. They fall into five classes depending on whether $\pi_1$ is cyclic or is based on one of the four isometries of a sphere. We slightly rephrase Seifert's result:

\begin{thm}[Seifert \cite{seifert}] The closed, oriented Seifert fibered spaces with finite but non-cyclic fundamental group are exactly those manifolds with base orbifold $S^2$ and the following presentations:
\begin{enumerate}
\item Type I, icosahedral: $\Big(b; \frac{a_1}{2},\frac{a_2}{3},\frac{a_3}{5}\Big)$ with $H_1(Y)=\Zed_m$ and $(m,30)=1$.
\item Type O, octahedral:  $\Big(b; \frac{a_1}{2},\frac{a_2}{3},\frac{a_3}{4}\Big)$ with $H_1(Y)=\Zed_{2m}$ and $(m,6)=1$.
\item Type T, tetrahedral: $\Big(b; \frac{a_1}{2},\frac{a_2}{3},\frac{a_3}{3}\Big)$ with $H_1(Y)=\Zed_{3m}$ and $(m,2)=1$.
\item Type D, dihedral: $\Big(b;\frac{a_1}{2},\frac{a_2}{2},\frac{a_3}{b_3}\Big)$ with $H_1(Y)=\Zed_{4m}$ and $(m,b_3)=1$ (if $b_3$ is even) or $H_1(Y) = \Zed_2\times\Zed_{2m}$ with $(m,2b_3)=1$ (if $b_3$ is odd).
\end{enumerate}
where $|H_1(Y)| = b_1b_2b_3\left(b+\frac{a_1}{b_1} + \frac{a_2}{b_2} + \frac{a_3}{b_3}\right)$ and $(a_i, b_i) = 1$. Any integer $m$ meeting the constraints listed for one of the four types I, O, T, or D corresponds (up to orientation) to a unique Seifert fibered space of type I, O, or T, or to a unique infinite family of type D indexed by the integer $b_3$.

 Any choice of $b, a_i,$ and $b_i$ meeting the appropriate relative primality conditions gives a Seifert fibered space. For each orientation, we choose a canonical presentation where $b=-1$, $a_1=1$, and $a_2=1$ or $2$. If we allow change of orientation, we can also require $a_2=1$ and (for type D) $a_3>0$. \label{thmh1order}\end{thm}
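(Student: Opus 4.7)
The plan is to reduce the classification to the classical enumeration of finite spherical $2$-orbifold groups, then verify the $H_1$ formula and normal form by direct manipulation of the surgery description.

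First, I would use the long exact sequence associated to the Seifert fibration, which relates $\pi_1(Y)$ to the orbifold fundamental group $\pi_1^{orb}(B)$ of the base $B=S^2(b_1,\ldots,b_r)$: finiteness of $\pi_1(Y)$ forces finiteness of $\pi_1^{orb}(B)$, hence positivity of the orbifold Euler characteristic $\chi^{orb}(B) = 2 - \sum_i (1 - 1/b_i) > 0$. Solving this inequality in integers $b_i\geq 2$ (and discarding $r\leq 2$, which yields cyclic $\pi_1$) gives exactly the Platonic triples $(2,3,3),(2,3,4),(2,3,5)$ and the dihedral family $(2,2,b_3)$. This produces the four cases T, O, I, D with the claimed multiplicities.

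Second, I would read off $H_1(Y)$ from the surgery description. The presentation $\bigl(b;\tfrac{a_1}{b_1},\tfrac{a_2}{b_2},\tfrac{a_3}{b_3}\bigr)$ is surgery on a plumbing of a central unknot of framing $b$ with three meridional unknots of framings $-b_i/a_i$; after clearing denominators the linking matrix is
\[
M \;=\; \begin{pmatrix} b & 1 & 1 & 1 \\ 1 & -b_1/a_1 & 0 & 0 \\ 1 & 0 & -b_2/a_2 & 0 \\ 1 & 0 & 0 & -b_3/a_3 \end{pmatrix},
\]
and $|H_1(Y)| = |\det M|$ evaluates to $\bigl|b_1 b_2 b_3 \bigl(b + \tfrac{a_1}{b_1}+\tfrac{a_2}{b_2}+\tfrac{a_3}{b_3}\bigr)\bigr|$. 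The group structure is then extracted from the Smith normal form of $M$: in types I, O, T the pairwise coprimality of $\{b_1,b_2,b_3\}\subseteq\{2,3,4,5\}$ forces a single nontrivial invariant factor and hence a cyclic group, whereas in type D the two copies of $2$ among the multiplicities yield either $\Zed_{4m}$ or $\Zed_2\oplus\Zed_{2m}$ according to the parity of $b_3$, which I would verify by explicit reduction.

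Third, the canonical form $b=-1$, $a_1=a_2=1$ is achieved using the Neumann moves (\ref{eqnneum1}) and (\ref{eqnneum2}). Iterating (\ref{eqnneum1}) absorbs integer parts of each $a_i/b_i$ into $b$, and applying (\ref{eqnneum2}) flips the sign to realize the normalization on the first two fractions $1/2$ and $1/b_2$ (or $1/2$ and $1/2$ in the dihedral case). Uniqueness of the manifold (up to orientation) for each admissible $m$ in types I, O, T follows because $b$ and $a_3$ are then determined modulo combined shifts by the value of $m=|H_1|/(\text{const})$. In type D the residue $a_3\pmod{b_3}$ is similarly determined, but $b_3$ itself is a free parameter, producing the claimed infinite family.

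The main obstacle is the dihedral case. Distinguishing $\Zed_{4m}$ from $\Zed_2\oplus\Zed_{2m}$ requires a careful parity bookkeeping in the Smith normal form of $M$, and showing that the family is genuinely indexed by $b_3$ (rather than collapsing for different $b_3$ at the same $m$) requires an invariant that separates the members, for which I would either appeal to Seifert's original uniqueness argument or compute the Seifert rational Euler number $e(Y)=-(b+\sum a_i/b_i)$ together with the unordered multiset of multiplicities, both of which are invariants of the Seifert structure and together recover $b_3$.
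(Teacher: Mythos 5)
Your overall strategy is sound and, in its first two steps, genuinely different from what the paper does. The paper's proof simply cites Seifert's explicit presentations of $\pi_1(Y)$ and $H_1(Y)$ and devotes its actual work to the back end: normalizing to $b=-1$, $a_1=a_2=1$ via the moves (\ref{eqnneum1}) and (\ref{eqnneum2}), and then, for each admissible $m$, exhibiting an explicit $a_3$ realizing $|H_1(Y)|=m$, $2m$, $3m$, or $4m$ (this is how the paper obtains the ``any integer $m$ \dots corresponds to'' clause). You instead rederive the admissible multiplicities from the orbifold exact sequence and $\chi^{orb}>0$, and the homology from a presentation matrix and Smith normal form. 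That buys a self-contained argument not resting on Seifert's original computation; what it costs is that you still owe the converse direction (a Seifert fibration over a spherical triangle orbifold has finite $\pi_1$ only when the fiber class has finite order, i.e.\ when the Euler number $b+\sum a_i/b_i$ is nonzero --- here guaranteed by the finiteness of the listed $H_1(Y)$), and you owe the explicit existence of a presentation for each admissible $m$, which your uniqueness discussion gestures at but does not carry out.

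One step is wrong as written: in type O the multiplicities are $(2,3,4)$, which are \emph{not} pairwise coprime, so your stated reason for $H_1(Y)$ being cyclic (``pairwise coprimality \dots forces a single nontrivial invariant factor'') fails in that case. The conclusion $H_1(Y)\cong\Zed_{2m}$ is nevertheless true --- for instance $\left(-1;\frac{1}{2},\frac{1}{3},\frac{3}{4}\right)$ has $H_1\cong\Zed_{14}$ rather than $\Zed_2\oplus\Zed_7$ --- but one only sees this by actually reducing the presentation matrix, exactly the ``explicit reduction'' you reserve for the dihedral case. So type O requires the same parity bookkeeping as type D; once that computation is supplied (and the rational ``linking matrix'' is replaced by the honest integral presentation matrix coming from the surgery relations, so that taking a determinant and Smith normal form is legitimate), the argument goes through.
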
 
\begin{proof}
Seifert calculates explicit descriptions of the fundamental group and first homology group and then deduces the possible framings; see \cite{seifert} for the details.
\[\pi_1(Y) = 
\left<\lambda, \mu, \mu_1, \dots, \mu_r \left| 
\begin{array}{c}
\mu \mu_1 \cdots \mu_r = 1, \left[ \lambda, \mu_i \right] = 1, 
\\
\mu=\lambda^b, ~ \mu_i^{\phantom{i}b_i} = \lambda^{a_i},
\end{array}\right.\right>
\]
\[\begin{array}{c}H_1(Y;\Zed) =\\ \phantom{ }\end{array}
 \begin{array}{c}
\Zed m_0 \oplus \cdots \oplus \Zed m_r \\
\hline
\left(\begin{array}{c}
	b \cdot m_0 +  m_1 + \cdots + m_r = 0 \\
	a_i \cdot m_0 = b_i \cdot m_i 
	\end{array}
\right)
\end{array}\]

To obtain the canonical presentation for a manifold of type D, first turn $(b;\frac{a_1}{2},\frac{a_2}{2},\frac{a_3}{b_3})$ into $(b;\frac{1}{2},\frac{1}{2},\frac{a_3'}{b_3})$ using Equation~\ref{eqnneum1}. Then adjust $b$ (perhaps changing $a_3'$ but leaving $a_1=a_2=1$) to get $(-1;\frac{1}{2},\frac{1}{2},\frac{a_3''}{b_3})$. If $a_3''<0$, reverse orientation as in Equation~\ref{eqnneum2} to $-(-2;\frac{1}{2},\frac{1}{2},\frac{b_3-a_3''}{b_3})=-(-1;\frac{1}{2},\frac{1}{2},\frac{-a_3''}{b_3})$ For types I, O, and T, first obtain $(-1; \frac{1}{2},\frac{1~or~2}{3},\frac{a_3}{b_3})$. If $a_2=2$, reverse orientation to $-(-2; \frac{1}{2},\frac{1}{3},1-\frac{a_3}{b_3})=-(-1; \frac{1}{2},\frac{1}{3},-\frac{a_3}{b_3})$.

Given a choice of I, O, T, or D and an $m$ that meets the appropriate primality conditions, the $b_i$ are determined, and there are $b$ and $a_i$ as follows. Assume $b=-1$ and $a_1=a_2=1$. For type I with $(m,30)=1$, $m\pmod 6\equiv -5$ or $5$. In the former case, set $a_3=\frac{m+5}{6}$, and $|H_1(Y)|=|6a_3-5|=m$; in the latter, set $a_3=-\frac{m-5}{6}$, so $|H_1(Y)|=m$. For type O with $(m,24)=1$, then $(m,3)=1$, so choose $a_3 = \frac{m+2}{3}$ or $-\frac{m-2}{3}$, whichever is an integer, and then $|H_1(Y)|=|6a_3-4|=2m$. For type T with $(m,18)=1$, then $(m,3)=1$, so choose $a_3 = \frac{m+1}{2}$ or $-\frac{m-1}{2}$, whichever is an integer, and then $|H_1(Y)|=|6a_3-3|=3m$. Finally, for type D with $(m,b_3)=1$, choose $a_3 = m$, so $|H_1(Y)|=4m$. 

Note that the canonical presentations for the two orientation of an I, O, or T manifold may be distinguished by whether $a_2$ is $1$ or $2$. The two orientations for a D manifold may be distinguished by whether $a_3=m$ is positive or negative.
\end{proof}

\paragraph{Finite surgeries.}

Many of the elliptic manifolds can be realized as torus knot surgeries.
\begin{thm} \cite[Propositions 3.1, 3.2, 4]{moser} \label{thmmoser}
\[
S^3_{p/q}(T_{r,s})=
\left\{\begin{array}{ll}
L_{r,s}\# L_{s,r} & if ~p/q=rs\\
L_{p,rsq} & if~ p/q=rs\pm 1/q\\
\left(b; \frac{a_1}{r}, \frac{a_2}{s}, \frac{a_3}{|rsq-p|}\right) & otherwise,~for~some~choice~of~ b,a_1,a_2,a_3
\end{array}\right.
\]
\end{thm}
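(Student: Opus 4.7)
The plan is to exploit the fact that the torus knot $T_{r,s}$ sits on a standardly embedded torus $T\subset S^3$ which splits $S^3$ into two solid tori $V_1, V_2$. I would first observe that the exterior $X = S^3 \setminus N(T_{r,s})$ carries a natural Seifert fibration: its fibers, interior to $V_1$ and $V_2$, are parallel copies of the $(r,s)$- and $(s,r)$-cables of the respective cores, and the two core circles descend to exceptional fibers of multiplicities $r$ and $s$ over a base orbifold which is a disk with two cone points of these orders. Performing $p/q$-surgery then amounts to gluing a solid torus $W$ along the slope $p\mu+q\lambda$ on $\partial X$, and the topology of the result is controlled entirely by how this filling curve interacts with the Seifert fibration on $\partial X$.

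The key geometric input is to identify the homology class on $\partial X$ of a regular Seifert fiber in the meridian/longitude basis. A direct computation --- for instance, by linking a parallel $(r,s)$-cable with the knot --- shows that such a fiber is homologous to $rs\,\mu+\lambda$. Consequently the algebraic intersection number of the filling slope with the Seifert fiber is $|p-rsq|$, and the trichotomy in the statement drops out of this. When $p/q=rs$, the filling meridian is itself a Seifert fiber: the meridional disk in $W$ cuts the fibration open and naturally decomposes the closed manifold along $T$ into two solid tori with disks glued in, producing the connected sum $L_{r,s}\# L_{s,r}$. When $|rsq-p|=1$, i.e.\ $p/q=rs\pm 1/q$, the filling slope meets each fiber exactly once, so the core of $W$ extends as a regular fiber; the result is Seifert fibered over $S^2$ with only two exceptional fibers of orders $r$ and $s$, which is a lens space, and the second parameter $rsq$ can be read off from the Seifert invariants. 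In every remaining case, the core of $W$ becomes a new exceptional fiber of multiplicity $|rsq-p|$, yielding a Seifert fibered space over $S^2$ with three exceptional fibers of orders $r$, $s$, and $|rsq-p|$, for some choice of unnormalized parameters $b, a_1, a_2, a_3$ determined by the gluing.

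The main obstacle is bookkeeping rather than conceptual. One must track how the meridian/longitude basis on $\partial X$ relates to the Seifert basis (a regular fiber together with a section), fix orientation conventions consistently between the two solid tori $V_1, V_2$, and --- in the lens-space case --- extract the correct second coefficient of $L_{p,rsq}$ from the Seifert data. These amount to computations in $SL_2(\mathbb{Z})$ with careful attention to sign conventions in order to match Moser's normalization; conceptually everything reduces to the single observation that the regular Seifert fiber on $\partial X$ has slope $rs$, so that after filling the fibration either extends (giving the lens-space or three-fiber case) or is destroyed (giving the connect sum).
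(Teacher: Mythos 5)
The paper does not prove Theorem~\ref{thmmoser}: it is quoted directly from Moser, and the only related argument in the text is the Kirby-calculus (blow-up) derivation of explicit Seifert invariants in the proof of Corollary~\ref{thmIOTaresurgeries}. Your argument is therefore a genuinely different route, and it is the standard modern one: the exterior $X$ of $T_{r,s}$ is Seifert fibered over a disk with two cone points of orders $r$ and $s$, the regular fiber on $\partial X$ represents $rs\mu+\lambda$, and the fate of the fibration under filling is governed by the intersection number $|p-rsq|$ of the filling slope with the fiber. The trichotomy you draw from this is correct. Two small points of execution: in the reducible case the decomposition is along the sphere obtained by capping the essential annulus $T\cap X$ with two meridian disks of the filling torus (not literally ``along $T$''), and each side is a solid torus with a $2$-handle attached, i.e.\ a punctured lens space. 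More substantively, in the lens-space case the bookkeeping you defer does not return the parameter $rsq$: carrying it out in Moser's normalization gives $L(p,qs^2)$, while $rsq\equiv\mp1\bmod p$, so $L(p,rsq)$ as literally printed would always be $\pm L(p,1)$ (false already for $7$-surgery on the trefoil, which is $L(7,2)$ up to the usual equivalences). That discrepancy lies in the paper's transcription of Moser rather than in your method, and it is immaterial to the rest of the paper, which never uses the lens-space case; but if you complete the $SL_2(\mathbb{Z})$ computation you should expect $qs^2$, not $rsq$. Relative to the paper's Kirby-calculus computations, your approach buys a conceptual explanation of why the multiplicities are $(r,s,|rsq-p|)$, at the cost of extra normalization work to extract the unnormalized data $b,a_1,a_2,a_3$ that the blow-up sequence in Corollary~\ref{thmIOTaresurgeries} produces directly.
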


\begin{cor}\label{thmIOTaresurgeries}
Every manifold of type I, O, or T is surgery on a $T_{n,2}$ torus knot. Of each infinite family of manifolds of type D with the same $|H_1(Y)|=4m$, only finitely many are surgeries on torus knots, and they are the ones where $b_3$ divides $2m+1$ or $2m-1$.
\end{cor}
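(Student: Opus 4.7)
The plan is to combine Moser's formula (Theorem~\ref{thmmoser}) with Seifert's classification (Theorem~\ref{thmh1order}). By Moser, a generic surgery $S^3_{p/q}(T_{r,s})$ produces a Seifert fibered space with $|H_1|=|p|$ and multiplicity triple $\{r,s,|rsq-p|\}$; by Seifert, a finite non-cyclic Seifert space of type I, O, or T is determined up to orientation by $|H_1(Y)|$ alone, while a type D space is determined by $|H_1(Y)|$ together with $b_3$. So the corollary reduces to the Diophantine question of when $\{b_1,b_2,b_3\}$ can be realized as $\{r,s,|rsq-p|\}$ with $(r,s)=1$ and $|p|=|H_1(Y)|$.

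For types I, O, and T I would use the knot $T_{3,2}$ and require $|6q-p|$ to equal $5$, $4$, or $3$, respectively. Setting $p=|H_1(Y)|$, this pins $p$ to a prescribed residue class mod $6$, and Seifert's arithmetic conditions on $|H_1(Y)|$ (respectively $(m,30)=1$; $|H_1|=2m$ with $(m,6)=1$; $|H_1|=3m$ with $(m,2)=1$) force $|H_1(Y)|$ into exactly that residue class. Hence the required integer $q$ always exists, and one obtains an explicit torus knot realization for every such manifold.

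For type D the multiplicity pattern $\{2,2,b_3\}$ combined with $\gcd(r,s)=1$ leaves only one possibility: the knot must be $T_{b_3,2}$ with $b_3$ odd, and one must have $|2b_3 q-p|=2$, i.e.\ $p=2b_3 q\pm 2$. Substituting $p=|H_1(Y)|=4m$ yields $b_3\mid 2m\pm 1=\frac{|H_1(Y)|}{2}\pm 1$, which is both the necessary condition and the finiteness assertion (only finitely many odd $b_3$ divide $\frac{|H_1(Y)|}{2}\pm 1$, and no even $b_3$ can occur). Conversely, given such $b_3$, the divisibility produces an integer $q$, and Seifert's uniqueness within the type D family with prescribed $(|H_1|,b_3)$ identifies $S^3_{p/q}(T_{b_3,2})$ with the intended manifold.

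The only real obstacle is a bit of bookkeeping: one must verify that Moser's third (generic) case genuinely applies in each construction, equivalently that $|rsq-p|\ge 2$ so the result is not $S^3$, a lens space, or a degenerate fibration. In every case above the third multiplicity lies in $\{2,3,4,5\}$, so this is automatic. Matching the canonical presentation of Theorem~\ref{thmh1order} (with $b=-1$ and $a_1=a_2=1$) then follows from the normalization moves already recorded in equations~\ref{eqnneum1} and \ref{eqnneum2}.
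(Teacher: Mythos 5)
Your proposal is correct and rests on the same two pillars as the paper's proof (Moser's multiplicity formula and Seifert's classification of the elliptic spaces), but it completes the identification step differently. The paper pins the surgered manifold down by explicit Kirby calculus: it records the exact Seifert invariants of $S^3_{p/q}(T_{s,2})$, $S^3_{p/q}(T_{4,3})$ and $S^3_{p/q}(T_{5,3})$, and then, for a type I, O or T manifold written as $\pm\left(-1;\frac{1}{2},\frac{1}{3},\frac{a_3}{b_3}\right)$, simply reads off the coefficient $p/q=\frac{6a_3-b_3}{a_3}$, so that $q=a_3$ and the coprimality of $p$ and $q$ is automatic from $(a_3,b_3)=1$. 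You instead match only the multiplicity triple and $|H_1(Y)|$ and then invoke the uniqueness clause of Theorem~\ref{thmh1order}; this is legitimate, and it buys you a proof that never needs the precise $a_i$ of the surgered manifold, only Moser's theorem as stated. But it shifts the burden onto arithmetic you only gesture at: the bookkeeping you flag (that Moser's generic case applies) is indeed automatic, while the bookkeeping you do not flag is that your $q$ must satisfy $(p,q)=1$. For type T both $q=(p+3)/6$ and $q=(p-3)/6$ are integers, and one of them can share a factor of $3$ with $p=3m$; you need $(m,3)=1$ (which does hold for tetrahedral manifolds, cf.\ the proof of Theorem~\ref{thmh1order}) to choose the good sign, and one-line gcd checks of the same kind are needed for types I, O and D. The type D half of your argument --- $T_{b_3,2}$ is forced by $\gcd(r,s)=1$, so $|2b_3q-p|=2$ and hence $b_3$ divides $\frac{|H_1(Y)|}{2}\pm1$, with even $b_3$ excluded since $2m\pm1$ is odd --- coincides with the paper's, including the converse construction of $q$ from the divisibility.
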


\begin{proof}
A careful application of Kirby calculus shows:
\[\begin{split}
S^3_{p/q}(T_{s,2}) 
&= \left(-1; \frac{1}{2}, \frac{(s-1)/2}{s}, \frac{q}{2sq-p}\right) 
= -\left(-1; \frac{1}{2}, \frac{(s+1)/2}{s}, \frac{q}{p-2sq}\right)\\
S^3_{p/q}(T_{4,3}) 
&= \left(-1; \frac{2}{3}, \frac{1}{4}, \frac{q}{12q-p}\right) 
= -\left(-1; \frac{1}{3}, \frac{3}{4}, \frac{q}{p-12q}\right)\\
S^3_{p/q}(T_{5,3}) 
&= \left(-1; \frac{1}{3}, \frac{3}{5}, \frac{q}{15q-p}\right) 
= -\left(-1; \frac{2}{3}, \frac{2}{5}, \frac{q}{p-15q}\right)
\end{split}\]

These cases cover all the finite torus knot surgeries since $p/q$-surgery on $T_{r,s}$ (if it is not a lens space or sum of lens spaces) has multiplicities $(r, s, |rsq-p|)$. A type I manifold may be surgery on $T_{3,2}$, $T_{5,2}$, or $T_{5,3}$; a type O manifold may be surgery on $T_{3,2}$ or $T_{4,3}$; a type T may be surgery on $T_{3,2}$; and a type D may be surgery on $T_{n,2}$.

By Theorem~\ref{thmh1order}, any I, O, or T manifold $Y$ may be written $\pm(-1; \frac{1}{2},\frac{1}{3},\frac{a_3}{b_3})$. A series of blow-ups on the trefoil shows $Y$ is $\frac{6a_3-b_3}{a_3}$-surgery on $T_{3,2}$ (up to orientation). 

A manifold of type D with multiplicities $(b_1,b_2,b_3)=(2,2,n)$ can only be surgery on a knot if $H_1(Y)$ is cyclic, meaning $n$ is odd, and it can only be surgery on $T_{n,2}$ if $2sq-p=\pm 2$. (\emph{NB:} $\frac{q}{2sq-p}$ is a reduced fraction since $(p,q)=1$.) Then $p=|H_1(Y)|$ and $q=\frac{|H_1(Y)|\pm2}{2n}$, i.e., $n$ divides either $\frac{|H_1(Y)|}{2}+1$ or $\frac{|H_1(Y)|}{2}-1$.
\end{proof}

\section{The invariant $d(Y,\spincstr)$}\label{sectd}

Heegaard Floer homology assigns a set of invariants (in our case, a graded abelian group over $\Zed_2$) to a closed, connected, oriented 3-manifold using a Heegaard decomposition of the manifold \cite{ozsz1,ozsz2}. A Langrangian Floer homology starts with a $2n$-dimensional symplectic manifold and  two $n$-dimensional Lagrangian submanifolds which meet transversely. The chain complex is a free $R$-module (for $R=\Zed_2$, $\Zed$, etc.) whose generators come from intersection points of the Lagrangians and whose boundary map counts pseudo-holomorphic disks associated to pairs of generators. Heegaard Floer homology is a Floer homology (after the work of Perutz \cite{perutzhandleslides}) that defines the symplectic manifold and the Lagrangians using a Heegaard decomposition of a 3-manifold. The generators of its chain complex can be thought of as sets of points on the Heegaard surface and the boundary maps can be analyzed by examining domains in the surface.

Heegaard Floer homology assigns a set of invariants to certain 3-manifolds $Y$, including rational homology spheres, indexed by their $\spinc$ structures $\spincstr$. These invariants are called the \emph{correction terms} or \emph{d-invariants} $d(Y,\spincstr)$. The hat version $\hf(Y)$ comes with a relative $\Zed$-grading which lifts to an absolute $\mathbb{Q}$-grading for a rational homology sphere (see \mbox{Theorem 7.1} of \cite{ozszsmooth}); it is defined by requiring that $\hf(S^3)\cong \Zed$ is supported in degree 0 and that the inclusion map $\widehat{CF}(Y,\spincstr) \hookrightarrow CF^+(Y,\spincstr)$ preserves degree. Then $d(Y,\spincstr)$ is the minimal grading of any non-torsion
class in $HF^+(Y,\spincstr)$ coming from $HF^{\infty}(Y,\spincstr)$ \cite{ozszabsolute}. If $Y$ is elliptic, all classes in $HF^+(Y)$ come from $HF^{\infty}(Y)$, and $d(Y,\spincstr)$ is defined for all $\spincstr$.

\paragraph{L-space surgeries.}
An elliptic Seifert fibered space is an example of an L-space, the Heegaard Floer homology version of a lens space \cite{ozszplumbed}. $\hf(Y)$ splits into $\oplus^{\spincstr} \hf(Y,\spincstr)$ over $\spinc$ structures (equivalence classes of non-zero vector fields $\spincstr$ that form a torsor over $H^2(Y;\Zed)$). Lens spaces have the nice property that each generator of $\hf(L(p,q))$ falls into a different torsion $\spinc$ structure ($\spincstr\in \spinc(Y)$ is torsion if $PD(c_1(\spincstr)) \in H_1(Y)$ is torsion). We will call any rational homology sphere with this property an \emph{L-space}. Equivalently, $\hf(Y,\spincstr) \cong\hf(S^3)$ for all $\spincstr$.

Using the surgery exact sequences and absolute grading on $HF^+(Y)$, we can place some restrictions on which knots may have L-space surgeries. Normalize the Alexander polynomial so
\[\Delta_K(T) = a_0 + \sum_{i=1}^n a_i(T^i + T^{-i}).\]
\begin{thm}\cite[Corollary 1.3]{ozszlens}
If a knot $K\subset S^3$ admits an L-space surgery, then the non-zero coefficients of $\Delta_K(T)$ are alternating $+1$s and $-1$s.\label{thmalex}
\end{thm}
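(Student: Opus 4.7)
The plan is to reduce the $L$-space hypothesis to a rank constraint on the knot Floer complex $CFK^\infty(K)$ and then extract the alternating coefficient statement via an Euler characteristic calculation. Since $\Delta_K(T)$ is invariant under mirroring, we may replace $K$ by $\bar K$ if necessary and assume $K$ admits a positive $L$-space surgery $S^3_{p/q}(K)$.

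First I would propagate the single $L$-space surgery to a whole family of surgeries. Using the surgery exact triangle
\[
\cdots\longrightarrow \hf\bigl(S^3_N(K)\bigr)\longrightarrow \hf\bigl(S^3_{N+1}(K)\bigr)\longrightarrow \hf(S^3)\longrightarrow\cdots
\]
together with the universal rank bound $\mathrm{rk}\,\hf(Y)\ge |H_1(Y;\Zed)|$ (equality being the definition of an $L$-space), one shows inductively that $S^3_N(K)$ is an $L$-space for every integer $N\ge 2g(K)-1$. Thus we may assume the $L$-space surgery is large integer surgery.

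Next I would invoke the large surgery formula: for $N\gg 0$ and each $\spinc$ structure $[i]$ on $S^3_N(K)$, indexed by $i\in \Zed/N$, there is a canonical identification
\[
HF^+\bigl(S^3_N(K),[i]\bigr)\;\cong\;H_*(A^+_i),\qquad \hf\bigl(S^3_N(K),[i]\bigr)\;\cong\;H_*(\widehat A_i),
\]
where $\widehat A_i\subset CFK^\infty(K)$ is the canonical subquotient $C\{\max(i,j)=0\}$ determined by the Alexander filtration, and $A^+_i$ is its $U$-completed analogue. The $L$-space hypothesis forces $H_*(\widehat A_i)\cong\Zed$ for every $i$, and in particular puts all summands of $\hf(S^3_N(K))$ in a single homological grading.

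The crux of the argument is to use these rank-one constraints, together with the conjugation symmetry $\widehat A_i\simeq \widehat A_{-i}$ and the natural chain maps between consecutive $\widehat A_i$, to conclude that $\widehat{HFK}(K,s)$ has total rank $0$ or $1$ in each Alexander grading, that the non-zero Alexander gradings form a finite symmetric sequence $s_0<s_1<\cdots<s_{2n}$, and that the Maslov gradings of the surviving generators alternate between successive $s_k$. Once this \emph{staircase} structure is in hand, the identity $\Delta_K(T)=\sum_{s}\chi\bigl(\widehat{HFK}(K,s)\bigr)T^s$ immediately yields non-zero coefficients $a_{s_k}=(-1)^k$, which is the claimed statement. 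The main obstacle is this last combinatorial step: extracting the staircase form of $CFK^\infty(K)$ from the hypothesis that every quotient complex $\widehat A_i$ has one-dimensional homology requires careful tracking of the vertical and horizontal differentials and of the relative Maslov gradings; the rest of the argument is standard machinery.
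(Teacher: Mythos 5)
The paper does not actually prove this statement: it is quoted verbatim from Ozsv\'ath and Szab\'o \cite[Corollary 1.3]{ozszlens}, so the only meaningful comparison is with their argument, and your outline does reproduce its architecture --- reduce to a large positive integer surgery, identify $\hf\bigl(S^3_N(K),[i]\bigr)$ with $H_*(\widehat A_i)$ via the large surgery formula, use the rank-one constraints to force a ``staircase'' knot Floer homology, and read off $\Delta_K(T)$ as the graded Euler characteristic. Within that outline there are two inaccuracies worth flagging. First, the parenthetical claim that the $L$-space condition ``puts all summands of $\hf(S^3_N(K))$ in a single homological grading'' is false: each $\hf(S^3_N(K),[i])$ is a single $\Zed$, but its absolute grading is the correction term $d(S^3_N(K),[i])$, which varies with $[i]$; fortunately nothing in your argument uses this. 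Second, the exact-triangle induction gives that $S^3_N(K)$ is an $L$-space for all integers $N\geq \lceil p/q\rceil$, not directly for all $N\geq 2g(K)-1$; the latter range is the content of Theorem~\ref{thmsurgcoeffform} and is not needed here, since any sufficiently large $N$ suffices. (If the given $L$-space slope is non-integral you also need a small additional step, e.g.\ the rational surgery formula, to reach an integral $L$-space slope before the induction starts.)

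The more substantive issue is that the step you correctly identify as the crux --- deducing from $H_*(\widehat A_i)\cong\Zed$ for all $i$ that $\widehat{HFK}(K,s)$ has rank at most one in each Alexander grading, that the supporting gradings are symmetric, and that the Maslov gradings of consecutive generators alternate in parity --- is exactly where all of the work in \cite{ozszlens} lives, and your proposal leaves it as a black box. The actual argument there is a downward induction on the top nontrivial Alexander grading using the pair of maps from $\widehat A_s$ to the vertical and horizontal quotient complexes (each of which computes $\hf(S^3)\cong\Zed$), showing at each stage that the surjectivity forced by $H_*(\widehat A_s)\cong\Zed$ kills all but one generator of $\widehat{HFK}$ in that grading and pins down its Maslov grading relative to the previous one. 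As written, your proposal is a correct and faithful road map of the cited proof, but it is a road map rather than a proof: if you were asked to supply this argument rather than cite it, the combinatorial induction would still need to be carried out.
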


Ozsv\'ath and Szab\'o \cite{ozszgenus} showed that the knot Floer homology $\widehat{HFK}(K,i)$ is $\Zed$ in the top grading $i=g(K)$ for any fibered knot, and Ghiggini \cite{ghigginigenusonefibered} and Ni \cite{nifibered} and, independently, Juh\'asz \cite{juhaszsutured} showed the converse; since $\Delta_K(T)$ is the graded Euler characteristic of $\widehat{HFK}(S^3,K)$, this means that
\begin{cor}\cite[Corollary 1.3]{nifibered}
If a knot $K\subset S^3$ admits an L-space surgery, then $K$ is fibered.
\label{thmfibered}\end{cor}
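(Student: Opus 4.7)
The plan is to chain together three deep inputs: the Alexander polynomial constraint from Theorem~\ref{thmalex}, the structure of knot Floer homology for $L$-space knots, and the fiberedness-detection theorem attributed to Ghiggini, Ni, and Juh\'asz. Heuristically, once we know $\Delta_K$ has only $\pm 1$ and $0$ coefficients, the Euler-characteristic relation with $\widehat{HFK}$ will force each nonzero group to be as small as possible, and in particular the top one will be $\mathbb{Z}$, which triggers the fiberedness criterion.

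First I would recall the categorification formula
\[
\Delta_K(T) = \sum_{i} \chi\bigl(\widehat{HFK}(K,i)\bigr)\, T^i,
\]
so the coefficient $a_i$ of $T^i$ in $\Delta_K$ equals the signed rank of $\widehat{HFK}(K,i)$. Next I would invoke the general genus detection result of Ozsv\'ath--Szab\'o: $g(K)$ equals the largest Alexander grading $i$ for which $\widehat{HFK}(K,i) \neq 0$. Combined with Theorem~\ref{thmalex}, which says that for an $L$-space knot the nonzero $a_i$ form an alternating string of $\pm 1$'s, this already tells us that the degree $n$ of $\Delta_K$ equals $g(K)$ and that $\chi(\widehat{HFK}(K,g(K))) = \pm 1$, so $\widehat{HFK}(K,g(K))$ has odd rank.

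The key step is upgrading this Euler-characteristic statement to a rank-one statement. For this I would use Ozsv\'ath--Szab\'o's structure theorem for $L$-space knots (essentially the same machinery underlying Theorem~\ref{thmalex}): the mapping-cone/large-surgery formula identifies the reduced knot Floer chain complex of an $L$-space knot with a staircase whose $U$-action is fully determined by $\Delta_K$, which forces $\widehat{HFK}(K,i)$ to be either $0$ or $\mathbb{Z}$ in every Alexander grading. Combined with the preceding paragraph, this gives $\widehat{HFK}(K,g(K)) \cong \mathbb{Z}$. Finally, I would quote the theorem of Ghiggini, Ni, and Juh\'asz cited just before the statement: any knot with $\widehat{HFK}(K,g(K)) \cong \mathbb{Z}$ is fibered. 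Applying this to our $K$ concludes the proof.

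The main obstacle is the middle step: establishing that $\widehat{HFK}(K,i)$ has rank at most one in every Alexander grading for an $L$-space knot. All the Euler-characteristic bookkeeping is routine, and the fiberedness converse is a black-box citation, but bounding the ranks requires genuine input from the surgery exact triangle and absolute grading machinery on $HF^+$, paralleling the proof of Theorem~\ref{thmalex}. In practice one sidesteps this by citing \cite{ozszlens} directly, since Ozsv\'ath--Szab\'o prove exactly this rank bound en route to their Alexander polynomial restriction.
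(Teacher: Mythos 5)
Your argument is correct and follows essentially the same route as the paper, which deduces the corollary from the Ghiggini--Ni--Juh\'asz fiberedness detection theorem together with the fact that an $L$-space knot has $\widehat{HFK}(K,g(K))\cong\mathbb{Z}$. You are in fact a bit more careful than the paper's one-line derivation: you correctly flag that the graded Euler-characteristic relation alone does not pin down $\widehat{HFK}(K,g(K))$, and that one must import the rank-at-most-one-per-Alexander-grading statement from Ozsv\'ath--Szab\'o's lens space paper (the same source as Theorem~\ref{thmalex}) to close that gap.
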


Finally, 
\begin{thm}\cite[Corollary 1.4]{ozszqsurgery} If a non-trivial knot $K$ admits a positive L-space surgery, then $S^3_{p/q}(K)$ is an L-space if and only if
\[\frac{p}{q} \geq 2 g(K) -1.\]\label{thmsurgcoeffform}
\end{thm}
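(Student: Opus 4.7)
The plan is to exploit the mapping cone formula of Ozsv\'ath and Szab\'o relating $HF^+(S^3_{p/q}(K))$ to the infinity version of the knot Floer complex $CFK^{\infty}(K)$. Because some positive surgery on $K$ is an $L$-space by hypothesis, the large surgery formula forces each quotient complex $A_s^+ = CFK^{\infty}\{\max(i,j-s)\geq 0\}$ to be isomorphic to $\Tnaught$, which in turn rigidifies $\widehat{HFK}(K)$ into a ``staircase'' supported in Alexander gradings $\pm n_1 > \pm n_2 > \cdots$; the top nonzero grading is $g(K)$ by Corollary~\ref{thmfibered}, and the graded Euler characteristic reproduces the alternating $\pm 1$ polynomial of Theorem~\ref{thmalex}.

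First I would handle the forward direction for integer slopes $n\in\mathbb{Z}$ by downward induction, starting from an $N\gg 0$ for which the large surgery formula directly identifies $HF^+(S^3_N(K))$ and confirms that it is an $L$-space. The inductive step uses the integer surgery exact triangle
\[
\cdots \longrightarrow HF^+(S^3) \longrightarrow HF^+(S^3_n(K)) \longrightarrow HF^+(S^3_{n+1}(K)) \longrightarrow \cdots
\]
refined along the torsion $\spinc$ decomposition and tracked using the absolute $\mathbb{Q}$-grading. Provided $n\geq 2g(K)-1$, the connecting map contributes only a single copy of $\Tnaught$ per relevant grading, forcing $HF^+(S^3_n(K))$ to retain the $L$-space shape. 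Once $n$ drops below $2g(K)-1$, the support of the staircase extends past the threshold and the induction fails: extra generators survive into $HF^+$, producing rank strictly greater than $n$.

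For arbitrary rational slopes $p/q \geq 2g(K)-1$, I would invoke the rational surgery mapping cone, which realizes $HF^+(S^3_{p/q}(K))$ as the homology of a cone built from $\bigoplus_{s}A_s^+$ and $\bigoplus_{s}B_s^+$, with connecting maps $v_s^+$ and $h_s^+$ read off from $CFK^{\infty}(K)$. Under the rigidity established above, every $A_s^+$ is already $\Tnaught$, and the total non-tower rank of the cone equals $p$ precisely when $v_s^+\oplus h_s^+$ is injective off the tower for every $s$; a direct calculation shows this injectivity is equivalent to $p/q\geq 2g(K)-1$. The converse is read off from the same cone: for sub-threshold slopes, some $A_s^+$ contributes a non-tower generator to the cokernel, yielding $\mathrm{rk}\,\widehat{HF}(S^3_{p/q}(K)) > p$, so the surgery fails to be an $L$-space.

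The main obstacle I anticipate is the bookkeeping inside the rational mapping cone: pinning down which summand $A_s^+$ lies in which $\spinc$ class of $S^3_{p/q}(K)$ and verifying that the inequality $p/q\geq 2g(K)-1$ is \emph{exactly} the threshold at which every connecting map hits the bottom of its target tower. For $q=1$ this is immediate from the surgery triangle, but for $q\geq 2$ the identification involves floor-function combinatorics and a careful accounting of the $q$ copies of each $A_s^+$, which is where the sharpness of the bound must be extracted.
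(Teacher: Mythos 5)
The paper does not prove this statement: it is imported verbatim as \cite[Corollary 1.4]{ozszqsurgery}, so the only meaningful comparison is with the proof in that source. Your outline does follow the strategy of the original proof --- rigidify the knot Floer complex using the L-space hypothesis, then compute the rank of the rational surgery mapping cone --- and the third paragraph is the correct architecture. But two steps as written would not survive being made precise. First, you invoke the large surgery formula to conclude $H_*(A_s^+)\cong\Tnaught$ for all $s$ directly from the hypothesis that \emph{some} positive slope gives an L-space; the large surgery formula only identifies $HF^+(S^3_N(K),[s])$ with $H_*(A_s^+)$ for $N$ large, and the given L-space slope may be far below that threshold (e.g.\ $+1$ on the trefoil, $N\geq 2g-1$ needed). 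You must first propagate the L-space condition \emph{upward} to all large integer slopes via the surgery exact triangle and the rank inequality $\mathrm{rk}\,\hf(S^3_{n+1}(K))\leq \mathrm{rk}\,\hf(S^3_n(K))+1$ together with the lower bound $\mathrm{rk}\,\hf\geq |H_1|$; only then does the large surgery formula deliver the rigidity of the $A_s^+$. Your text gestures at "starting from an $N\gg 0$" but has the logical order reversed.

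Second, the downward induction on integer slopes via the exact triangle cannot work as described. The triangle only gives $\mathrm{rk}\,\hf(S^3_n(K))\leq \mathrm{rk}\,\hf(S^3_{n+1}(K))+1$, which leaves open the possibility of rank $n+2$, and the triangle contains no information about $g(K)$, so the claim that "provided $n\geq 2g(K)-1$ the connecting map contributes only a single copy of $\Tnaught$" is exactly the content that needs proving and cannot be extracted from the triangle alone. This paragraph should be discarded: the (truncated) mapping cone computation handles integer and rational slopes uniformly. There the criterion is not "injectivity off the tower" (every $A_s^+$ and $B_s^+$ already has homology a single tower) but a counting argument: each $A_s^+$ contributes $U^{V_s}$ and $U^{H_s}$ to the two adjacent $B$'s, $V_s=0$ for $s\geq g$ and $H_s=0$ for $s\leq -g$, roughly $q$ copies of each $A_s^+$ with $|s|<g$ appear, and the homology of the cone in each $\spinc$ class is a single tower precisely when the count of "interior" $A$'s does not exceed that of the $B$'s, which is the inequality $p/q\geq 2g-1$. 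With those two repairs the sketch matches the Ozsv\'ath--Szab\'o argument.
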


These facts lead to another observation which seems to be known among the community but not frequently written down. 

\begin{cor}\label{thmamphichiral}
No non-trivial knot has both positive and negative L-space surgeries. No amphichiral knots have L-space surgeries. In particular, no knot has both positive and negative finite surgeries, and no amphichiral knot has any finite surgeries.
\end{cor}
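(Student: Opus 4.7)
The plan is to reduce everything to Theorem~\ref{thmsurgcoeffform}, using two elementary observations. First, the L-space property is preserved by orientation reversal, since it is a statement about the ranks of $\hf(Y,\spincstr)$. Second, reversing orientation sends $S^3_{p/q}(K)$ to $S^3_{-p/q}(\bar K)$, where $\bar K$ is the mirror of $K$. Together these imply that $K$ admits a negative L-space surgery if and only if $\bar K$ admits a positive L-space surgery. I restrict throughout to non-trivial $K$, so that $g(K)\geq 1$; the unknot is a trivial (cyclic) exception to each assertion.

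For the first statement, suppose $K$ admits a positive L-space surgery. Theorem~\ref{thmsurgcoeffform} then forces every L-space surgery coefficient $p/q$ to satisfy $p/q\geq 2g(K)-1\geq 1>0$, so $K$ cannot also have a negative L-space surgery.

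For the second statement, suppose $K\cong \bar K$ and some $S^3_{p/q}(K)$ is an L-space. After replacing $p/q$ by $-p/q$ if necessary (which in the amphichiral case produces another L-space surgery on $K$ via the orientation-reversal identity), we may assume $p/q>0$. Then
\[
S^3_{-p/q}(K) \;=\; S^3_{-p/q}(\bar K) \;=\; -S^3_{p/q}(K)
\]
is again an L-space, giving $K$ both a positive and a negative L-space surgery and contradicting the first claim.

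The last two sentences of the corollary follow immediately: a finite surgery yields a lens space or an elliptic Seifert fibered space, and both are L-spaces by the discussion at the start of Section~\ref{sectd}, so the general L-space assertions specialize. No step presents a real obstacle — the whole argument is bookkeeping with mirrors on top of Theorem~\ref{thmsurgcoeffform}. The only minor point to verify is that the coefficient $p/q=0$ cannot slip in, which is automatic since $S^3_0(K)$ has $b_1=1$ and so is never a rational homology sphere, let alone an L-space.
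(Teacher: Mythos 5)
Your proof is correct, but it takes a genuinely different route from the paper. The paper's argument is a two-line application of the concordance invariant $\tau$: by \cite[Corollary 1.6]{ozszlens}, a positive $L$-space surgery forces $\tau(K)=g(K)$, a negative one forces $\tau(mK)=g(mK)$, and since $\tau(K)=-\tau(mK)$ while $g(K)=g(mK)$ these are incompatible unless $g(K)=0$. You instead lean entirely on the ``only if'' direction of Theorem~\ref{thmsurgcoeffform}: once $K$ has one positive $L$-space surgery, \emph{every} $L$-space surgery coefficient must satisfy $p/q\geq 2g(K)-1\geq 1$, so no negative coefficient can occur; the amphichiral and finite-surgery statements then follow by the mirror identity $S^3_{-p/q}(\bar K)=-S^3_{p/q}(K)$ and the fact that elliptic spaces and lens spaces are $L$-spaces. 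Both arguments are sound and both silently exclude the unknot (which does have $L$-space surgeries of both signs); you handle that exclusion explicitly, which is cleaner than the paper. Your version has the advantage of being self-contained within results already quoted in Section~\ref{sectd}, whereas the paper imports an extra fact about $\tau$; the mild cost is that you must read the ``if and only if'' in Theorem~\ref{thmsurgcoeffform} as applying to negative coefficients as well, which the statement as quoted does permit. Your closing observations --- that orientation reversal preserves the $L$-space condition because $\hf(-Y,\spincstr)$ and $\hf(Y,\spincstr)$ have the same rank, and that $p/q=0$ is excluded since $S^3_0(K)$ has $b_1=1$ --- are correct and worth stating.
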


\begin{proof}
If $K$ has a positive L-space surgery, then $\tau(K)=\deg(\Delta_K(T))=g(K)$ \cite[Corollary 1.6]{ozszlens}. If $K$ has both positive and negative L-space surgeries, meaning both $K$ and its mirror $mK$ have positive L-space surgeries, then $\tau(K)=g(K)=g(mK)=\tau(mK)$, but $\tau(K)=-\tau(mK)$.
\end{proof}

\paragraph{Calculating $d(Y,\spincstr)$ of a knot surgery.}
If $\surg(K)$ is an L-space, then $\hf(\surg(K))$ and its gradings can be calculated from $\Delta_K(T)$ and $p/q$:
\begin{thm} If $0<q<p$, there is a particular identification of $\spinc$ structures with $\Zed_p$ such that
\begin{itemize}
\item[a.] \cite[Proposition 4.8]{ozszabsolute}:
for $0 \leq i < p+q$, 
\[d(\surg(U),i) = -\left( \frac{p q-(2i+1-p-q)^2}{4p q}\right) - d(S^3_{q/r}(U),j)\]
where $r \equiv p \bmod q$ and $j \equiv i \bmod q$.
\item[b.] \cite[Theorem 1.2]{ozszqsurgery}: for $|i|\leq \frac{p}{2}$,
\[d(S^3_{p/q}(K),i) - d(S^3_{p/q}(U),i) = -2 \sum_{j=1}^{\infty} ja_{c+j}\]
where $c=\left|\left\lfloor \frac {i}{q} \right\rfloor\right|$ and the $a_j$ are the coefficients of the symmetrized Alexander polynomial.
\end{itemize}\label{thmd}
\end{thm}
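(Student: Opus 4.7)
The plan is to prove the two parts separately, since part (a) concerns only lens spaces while part (b) needs the full surgery formula for knot Floer homology.

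For part (a), I would exploit the recursive structure of lens spaces under the Euclidean algorithm. Writing $p = kq + r$ with $0 \leq r < q$, there is a standard cobordism $W \colon L(p,q) \to L(q,r)$ obtained by attaching a single $2$-handle corresponding to one step of the continued-fraction expansion of $p/q$. I would apply the surgery exact triangle in Heegaard Floer homology to show that this cobordism induces an isomorphism on $HF^\infty$ in each torsion $\spinc$ structure, then invoke the absolute grading shift formula
\[
d(L(q,r), \spincstrextend|_{L(q,r)}) - d(L(p,q), \spincstrextend|_{L(p,q)}) = \frac{c_1(\spincstrextend)^2 - 2\chi(W) - 3\sigma(W)}{4}
\]
for each $\spinc$ extension $\spincstrextend$ over $W$. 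A careful bookkeeping of the quadratic form on $\spinc$ structures shows that the $c_1^2$ term produces exactly $\frac{pq - (2i+1-p-q)^2}{4pq}$ after identifying $\spinc(L(p,q))$ with $\Zed_p$ in the standard way. Iterating this one-step reduction and matching the residue $j \equiv i \pmod q$ at each stage gives the claimed formula, with the base case $d(S^3,0) = 0$. The main subtlety here is keeping consistent identifications of $\spinc$ structures under the recursion, since the natural labeling in terms of $H_1$ shifts at each step.

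For part (b), I would use the Ozsv\'ath--Szab\'o mapping cone formula, which expresses $HF^+(\surg(K))$ as the homology of a mapping cone built from complexes $A_s^+$ and $B_s^+$ associated to a knot Floer filtration on $CFK^\infty(K)$. By Theorem~\ref{thmalex} and the subsequent results, an $L$-space knot has a knot Floer complex that is a staircase whose steps are determined by the alternating $\pm 1$ coefficients of $\Delta_K(T)$. Under this staircase assumption, the homology of each $A_s^+$ is a single tower $\Tnaught$ whose absolute grading is shifted below that of the unknot's tower by exactly $-2 t_{|s|}(K)$, where $t_c(K) = \sum_{j=1}^\infty j\,a_{c+j}$ is the $c$th torsion coefficient of $K$. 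This identity is a direct computation from the staircase: each pair of steps contributes $+1$ and $-1$ to $\Delta_K$ at a given filtration level and shifts the surviving generator in $A_s^+$ by the appropriate amount. The mapping cone then identifies the bottom generator in each $\spinc$ structure on $\surg(K)$ with a specific $A_s^+$ where $s$ satisfies $|s| \leq p/2$ and $c = \lfloor |i|/q \rfloor$, giving exactly the stated shift from $d(S^3_{p/q}(U),i)$.

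The main obstacle, and where I would spend most of the effort, is part (b): one needs to verify that the mapping cone really does deliver the $d$-invariant directly without extra grading corrections, which requires tracking the absolute $\mathbb{Q}$-grading through the identification of $HF^+(\surg(K))$ with mapping-cone homology and confirming that for $L$-space surgeries the ``bottom of the tower'' sits in the $A_s^+$ complex indexed by $c = \lfloor |i|/q \rfloor$. Part (a) is technically more elementary but notationally fussy because of the need to line up $\spinc$ structures under the Euclidean descent. Once both parts are in hand, the combination yields an entirely combinatorial formula for $d(\surg(K),i)$ in terms of $p$, $q$, $i$, and the coefficients of $\Delta_K(T)$, which is the workhorse tool the later sections will use.
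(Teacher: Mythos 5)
This theorem is not proved in the paper at all: both parts are quoted directly from Ozsv\'ath--Szab\'o (Proposition~4.8 of the absolute-gradings paper for (a), Theorem~1.2 of the rational-surgeries paper for (b)), so there is no in-paper argument to compare yours against. Measured against the original sources, your outline identifies the right machinery: the Euclidean-algorithm recursion through a two-handle cobordism for lens spaces in (a), and the mapping cone together with the staircase form of $CFK^\infty$ of an $L$-space knot in (b).

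There is, however, a genuine gap in part (a) as written. Knowing that $F^\infty_{W,\spincstrextend}$ is an isomorphism and applying the grading-shift formula does not by itself give the claimed equality: a $U$-equivariant map $\Tnaught\to\Tnaught$ that is an isomorphism in sufficiently high degrees need not be surjective, so this argument alone yields only an inequality between $d(L(p,q),i)$ and the shifted $d(L(q,r),j)$, maximized over $\spinc$ extensions across $W$. The missing ingredient is sharpness of the cobordism --- that for the $c_1^2$-maximizing extension the induced map on $HF^+$ hits the bottom of the target tower. In the lens-space setting this is exactly what the full surgery exact triangle supplies (the third term is again an $L$-space and exactness forces the maps on $HF^+$), so you should use the triangle to pin down the maps on $HF^+$, not merely the $HF^\infty$ isomorphism. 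Three smaller cautions: the two-handle cobordism naturally runs from $L(q,r)$ to $L(p,q)$, the reverse of what you wrote, which flips the inequality you need to close; the recursion involves an orientation reversal at each step, which is where the minus sign in front of $d(S^3_{q/r}(U),j)$ comes from and which your iteration must track; and in (b) your $c=\lfloor|i|/q\rfloor$ disagrees with the statement's $c=\left|\left\lfloor i/q\right\rfloor\right|$ when $i<0$ and $q\nmid i$, so the $\spinc$ labeling needs care. Otherwise the crux you isolate for (b) --- that $H_*(A_s^+)\cong\Tnaught$ with bottom grading shifted by $-2t_{|s|}(K)$ relative to $B_s^+$, which is where the alternating $\pm1$ coefficients of $\Delta_K(T)$ enter --- is the correct one, and you rightly identify it as where most of the work lives.
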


\paragraph{Calculating $d(Y,\spincstr)$ of a Seifert fibered space.}\label{sectionplumbing}
It is often possible to calculate the $d(Y,\spincstr)$ algorithmically using plumbing graphs \cite{ozszplumbed}.

See \cite{neumannplumbing} for a thorough exposition of plumbing graphs. 

Consider $\Gamma$ a tree with vertices $\mathfrak{v}$ which have integer weights $\mathrm{m}(\mathfrak{v})$. The graph $\Gamma$ describes a 4-manifold $X=\mathrm{X}(\Gamma)$: for each vertex, take a disk bundle over the sphere with Euler number $\weight(\mathfrak{v})$; for each edge, plumb together the corresponding bundles. The boundary of $\mathrm{X}(\Gamma)$ is a 3-manifold we call $\mathrm{Y}(\Gamma)$. For example, the lens space $L(7,4)$ may be given as $Y(\Gamma)$ for either graph below since $[-3, -2, -2]$ and $[-2,3]$ are both continued fraction expansions for $-7/3$:
\[\xymatrix{
\stackrel{-3}{\bullet} \ar@{-}[r] & \stackrel{-2}{\bullet} \ar@{-}[r] & \stackrel{-2}{\bullet} \qquad = \qquad  \stackrel{-2}{\bullet} \ar@{-}[r] & \stackrel{+3}{\bullet}}\] 
Similarly, the Poincar\'e homology sphere (with nonstandard orientation) $Y=\left(-2; \frac{1}{2}, \frac{2}{3}, \frac{4}{5}\right)$ is given by
\begin{equation}
\xymatrix{
& \stackrel{-2} {\bullet} \ar@{-}[dl] \\
\stackrel{-2}{\bullet}\ar@{-}[dr]  \ar@{-}[r] & \stackrel{-2}{\bullet} \ar@{-}[r] & \stackrel{-2}{\bullet} \\
 & \stackrel{-2}{\bullet} \ar@{-}[r] & \stackrel{-2}{\bullet} \ar@{-}[r]& \stackrel{-2}{\bullet}\ar@{-}[r] & \stackrel{-2}{\bullet}
}\label{eqngraph}
\end{equation} 
where the central vertex is $\mathfrak{v}_1$; the top arm is a single vertex $\mathfrak{v}_2$ of weight $-2/1$; the next arm consists of vertices $\mathfrak{v}_3, \mathfrak{v}_4$ labelled from left to right with weights giving the continued fraction expansion of $-3/2$; and the bottom arm is $\mathfrak{v}_5, \cdots, \mathfrak{v}_8$ with weights giving the continued fraction expansion of $-5/4$. In general, an elliptic space may be written $Y=(b; \frac{a_1}{b_1},\frac{a_2}{b_2}, \frac{a_3}{b_3})$ with $0 < a_i/b_i < 1$ and (perhaps after reversing orientation) $b \leq 0$. If $p/q<0$, it is possible to chose the $x_i\leq -2$ (choose them to be negative; if a $-1$ appears, blow it down). Then $Y=Y(\Gamma)$ for a graph $\Gamma$ with a central vertex of degree $3$ and weight $b$ and with three arms with vertices of degree $\leq 2$ with weights given by the continued fraction expansion of $-b_i/a_i$ chosen so that the weights are $\leq -2$. Additionally, if the orientation is chosen so that $e(\Gamma)=b-\sum_{i=1}^3 a_i /b_i <0$ (i.e., $b\leq 0$), then $\Gamma$ is the dual graph of a good resolution of a singularity and $X(\Gamma)$ is negative definite \cite[Corollary 8.3]{neumannplumbing}. 

An elliptic space $Y$ with the description given above additionally has the property that $\weight(\mathfrak{v}) \leq -\deg(\mathfrak{v})$ for each vertex except possibly the central one, as in (\ref{eqngraph}); we will call a vertex violating this property \emph{bad}.

$H_2(X;\Zed)$ is a lattice freely spanned by the vertices of $\Gamma$. Define a matrix $Q$ for the intersection form using $\Gamma$: if $\mathfrak{v}$ is a vertex and $v$ the corresponding homology class, $v\cdot v=\weight(\mathfrak{v})$; if $\mathfrak{v}$ and $\mathfrak{w}$ are distinct vertices, $v\cdot w=1$ if there is an edge between $\mathfrak{v}$ and $\mathfrak{w}$ and $0$ otherwise. If, as above, $e(\Gamma)<0$, then $Q$ is negative definite. For $\Gamma$ in (\ref{eqngraph}), $Q$ is $E_8$.

The \emph{characteristic vectors} or $\Char(\Gamma)$ are the $V\in H^2(X;\Zed)$ such that 
\[\langle V, w \rangle \equiv w\cdot w \bmod 2 \quad \forall ~w\in H_2(X;\Zed).\]
$\Char(\Gamma)$ splits over $\spinc(Y(\Gamma))$. Let $\Char_\spincstr(\Gamma)$ be the characteristic vectors where $V=c_1(\spincstrextend)$ for some $\spincstrextend\in \spinc(X(\Gamma))$ with $\spincstrextend|_{Y(\Gamma)}=\spincstr$. It is easy to identify a characteristic vector using Hom duality: for $V\in H^2(X)$, note that $\langle V,w\rangle = PD^{-1}(V)\cdot w = v^TQw$ for some $v\in H_2(X)$. Then $v^T$ is the Poincar\'e dual of $V$, and $v^TQ$ is its Hom dual. $V$ is characteristic exactly when $PD^{-1}(V)\cdot v_i \equiv v_i \cdot v_i \bmod 2$, i.e., the $i^{th}$ coordinate of $v^TQ$ has the same parity as $\weight(\mathfrak{v}_i)$ for all vertices $\mathfrak{v}_i$. For example, $\Char(\Gamma)$ of (\ref{eqngraph}) consists of all vectors $v^TQ$ with even coordinates.

$HF^+(-Y,\spincstr)$ can be expressed in terms of $\Char_\spincstr(\Gamma)$. Let
\[\Tnaught = \Zed[U,U^{-1}]/U\cdot \Zed[U]\]
as a $\Zed[U]$-module with grading so that $U^{-d}$ is homogeneous and supported in degree $2d$ (where $d>0$). Then $HF^+(-Y,\spincstr)$ is isomorphic to the set of functions
\[\phi: Char_\spincstr(G) \rightarrow \Tnaught\]
which preserve the adjunction relations
\begin{align*}
U^n \cdot \phi(V + PD(w))& = \phi(V) &\textrm{ if } n\geq 0\\
\phi(V + PD(w)) &= U^{-n} \cdot\phi(V) &\textrm{ if } n\leq 0
\end{align*}
where $2n=\langle V, w \rangle + w \cdot w$. 

The grading of $HF^+(-Y,\spincstr)$ is induced from the grading on $\Tnaught$ by
\[\deg(\phi) = \deg(\phi(V)) - \frac{V^2 + |\Gamma|}{4}\]
if $\phi(V)\in \Tnaught$ is a non-trivial homogeneous element, where $|\Gamma|$ is the number of vertices in $\Gamma$. We could calculate $d(Y,\spincstr)$ by optimizing this grading over $\Char_\spincstr(\Gamma)$, but it would be very labor intensive. To better study the grading on characteristic vectors, define an operation on $\Char_\spincstr(\Gamma)$ by 
\begin{equation}\label{eqnoperation} V \mapsto V+2PD(v_i) \qquad if~ \langle V,v_i\rangle = -\weight(\mathfrak{v}_i) \end{equation}
That is, find $v^TQ$ where $v^TQ$ has $-\weight(\mathfrak{v}_i)$ as its $i^{th}$ coordinate and has the same parity as $\weight(\mathfrak{v}_j)$ in all other coordinates. This operation changes the $i^{th}$ coordinate to $\weight(\mathfrak{v}_i)$ and adds 2 to the $j^{th}$ coordinate if and only if there is a edge between $\mathfrak{v}_i$ and $\mathfrak{v}_j$. This operation does not change the class in $\Char_\spincstr(\Gamma)$, and it does not change the value $V^2=\langle V, PD^{-1}(V)\rangle$. For the graph $\Gamma$ of (\ref{eqngraph}), the vector 
\[V=(2, 0, 0, 0,0,0,0,0)\] satisfies $ \langle V,v_i\rangle = -\weight(\mathfrak{v}_i)$  for $i=1$, so the operation gives
\[V+2PD(v_1)= (-2, 2, 2, 0,2,0,0,0)\] 
and \[V'=(-2, 2, 2, 0,2,0,0,0)\] satisfies the equality for $i=2, 3,$ or $5$, which gives
\[\begin{split}
V'+PD(v_2)=(0, -2, 2, 0,2,0,0,0)\\ V'+PD(v_3)=(0, 2, -2, 2,2,0,0,0)\\ V'+PD(v_5)=(0, 2, 2, 0,-2,2,0,0)\end{split}\]

A \emph{path} of vectors is a sequence $\{V_0, V_1, \cdots, V_k\}$ where $V_{i+1}$ is derived from $V_i$ by this operation, and a \emph{full} path is maximal with respect to this operation. For example, $\{(0,0,0,0,0,0,0,0)\}$ is actually a full path for $\Gamma$ in (\ref{eqngraph}).

A \emph{nice} characteristic vector obeys
\begin{equation}\label{eqnnice}\weight(\mathfrak{v_i}) \leq \langle V,v_i\rangle \leq -\weight(\mathfrak{v_i}) \quad \forall~i\end{equation}
that is, the $i^{th}$ coordinate of $v^TQ$ is between $-\weight(\mathfrak{v_i})$ and $\weight(\mathfrak{v_i})$. There are a finite number of nice characteristic vectors. By \cite[Proposition~3.2]{ozszplumbed}, every full path of \emph{nice} vectors $\{V_0, V_1, \cdots, V_k\}$ obeys the additional property that $V_0$ and $V_k$ obey 
\begin{eqnarray}
\weight(\mathfrak{v_i}) < \langle V_0,v_i\rangle \leq -\weight(\mathfrak{v_i}) \quad \forall~i\label{eqnnicefull}\\
\weight(\mathfrak{v_i}) \leq \langle V_k,v_i\rangle < -\weight(\mathfrak{v_i}) \quad \forall~i\label{eqnnicefull2}
\end{eqnarray}
For example, for $\Gamma$ in (\ref{eqngraph}), $(2,2,0,0,0,0,0,0)$ is nice, but $(4,-2,0,0,0,0,0,0)$ is not. For a given vector $V$, if there is any full path of nice vectors containing $V$, then all paths containing $V$ have only nice vectors, and all full paths containing $V$ are the same length and start and end at the same $V_0$ and $V_k$. For  $\Gamma$ of (\ref{eqngraph}), there is only one full path of nice vectors, and it contains only the vector $(0,0,0,0,0,0,0,0)$.

Using full nice paths, we may now calculate $d(Y(\Gamma),\spincstrextend)$ in a reasonably efficient fashion.

\begin{thm}\cite[Corollaries 1.5 and 3.2]{ozszplumbed} Let $\Gamma$ be a connected tree with at most one bad vertex and $\spincstr\in\spinc(Y(\Gamma))$. Then
\[d(Y(\Gamma),\spincstr) = -\max_{V \in \Char_\spincstr (\Gamma)}  \frac{V^2 +|\Gamma|}{4}\]
In fact, this maximum is obtained over the vectors that are part of nice full paths and obey Equation~\ref{eqnnicefull} (equivalently, Equation~\ref{eqnnicefull2}).
\end{thm}

Recall that $V$ is actually an element in $HF^+(-Y(\Gamma),\spincstr)$, hence the negative sign and maximum instead of minimum.

Given a characteristic vector $V$ written as $v^TQ$, it is easy to calculate $d(Y(\Gamma),\spincstr)$ since $V^2=VQV^T=(v^TQ)Q^{-1}(v^TQ)^T$. In the case of $\Gamma$ of (\ref{eqngraph}), the vector $v^TQ=(0,0,0,0,0,0,0,0)$ has $V^2=0$ and so $d(-Y(\Gamma),\spincstr_0)=2$ where $\spincstr_0$ is the single $\spinc$ structure.

\begin{ex}Calculate the correction terms for the first family of dihedral manifolds from Theorem~\ref{thmh1order}, $Y = \left(-1;\frac{1}{2},\frac{1}{2},\frac{1}{n}\right)$. Reversing orientation so $e(-Y)<0$, $-Y = -\left(-2;\frac{1}{2},\frac{1}{2},\frac{n-1}{n}\right)$ and $\Gamma$ is
\[\xymatrix{
 &  \stackrel{-2} {\bullet} \\
 \stackrel{-2} {\bullet} \ar@{-}[ur]\ar@{-}[r] \ar@{-}[rd]&   \stackrel{-2} {\bullet}\\
 &  \stackrel{-2} {\bullet}\ar@{-}[r]  &   \stackrel{-2} {\bullet}\ar@{-}[r] &  \stackrel{-2} {\bullet} \ar@{..}[r] &  \stackrel{-2} {\bullet}
 }\]
where $\mathfrak{v}_1$ is degree $3$ vertex on the left, $\mathfrak{v}_2$ is on the top arm, $\mathfrak{v}_3$ is on the middle arm, and $\mathfrak{v}_4,\cdots, \mathfrak{v}_{n+2}$ are on the bottom arm.

Equivalently, the 4-manifold has intersection form 
\[Q=
\left[\begin{array}{cccccccc}
-2 & 1 & 1 & 1 & 0 & \cdots & 0 & 0\\
1 & -2 & 0 & 0 & 0 & \cdots & 0 & 0\\
1 & 0 & -2 & 0 & 0 & \cdots & 0 & 0\\
1 & 0 & 0 & -2 & 1 & \cdots & 0 & 0\\
0 & 0 & 0 & 1 & -2 & \cdots & 0 & 0\\
\vdots & \vdots & \vdots & \vdots & \vdots & \ddots & \vdots & \vdots\\
0 & 0 & 0 & 0 & 0 & \cdots & -2 & 1  \\
0 & 0 & 0 & 0 & 0 & \cdots & 1 & -2
\end{array}\right]\]
There are four full, nice paths for $\Gamma$; they start with the four vectors $V$ which, when written in the form $v^TQ$, are 
\[\begin{split}(0, 0, 0, 0, \cdots, 0)\\(0, 2, 0, 0, \cdots, 0)\\(0, 0, 2, 0, \cdots, 0)\\(0, 0, 0, 0, \cdots, 2)\end{split}\] These vectors have squares \[0,~ -(n+2),~ -(n+2), ~-4\] respectively, and so the correction terms of $Y(\Gamma)$ are \[-\frac{n+2}{4}, ~0, ~0,~ - \frac{n-2}{4}\]
\label{ex4}
\end{ex}

We list the correction terms for all the dihedral manifolds with $|H_1(Y)|\leq 32$ in Table~\ref{tableSFScorrterms}.

\section{$d(Y,\spincstr)$ as a knot surgery obstruction}\label{sectresults}

To demonstrate some of the techniques will use to prove Theorems~\ref{thmz2} and~\ref{thmz32}, we will summarize the proof of Theorem~\ref{thmz1}, due to Ghiggini:

\begin{proof}[Proof of Theorem~\ref{thmz1} {\cite[Corollary 1.7]{ghigginigenusonefibered}}]
By Theorem~\ref{thmh1order}, the Poincar\'e homology sphere is (up to orientation)
\[Y = -\left(-1; \frac{1}{2}, \frac{1}{3}, \frac{1}{5}\right).\]
Assume $Y$ or $-Y=S^3_{p/q}(K)$ with $p/q>0$. Recall $S^3_{p/q}(K) =-S^3_{-p/q}(mK)$ and $d(Y,\spincstr)=-d(-Y,-\spincstr)$. Then $|H_1(Y)|=p=1$. Since $Y$ is not a lens space, $g(K)>0$; since it is an L-space, Theorem~\ref{thmsurgcoeffform} says:
\[\frac{1}{q} \geq 2g(K)-1\]
Therefore, $p/q=1$ and $g(K)=1$. 

By Theorem~\ref{thmalex},
\[\Delta_K(T) = T-1+T^{-1}.\]
By Corollary~\ref{thmfibered}, $K$ is fibered. Therefore, $K$ is the right-handed trefoil $T_{3,2}$ or the left-handed one $T_{3,-2}$ (see, e.g., Burde and Zieschang \cite{burdezieschang}). By the calculations in the proof of Corollary~\ref{thmIOTaresurgeries}, $S^3_{1}(T_{3,2})=-S^3_{-1}(T_{3,-2})=\left(-1; \frac{1}{2}, \frac{1}{3}, \frac{1}{5}\right)$, and $S^3_{1}(T_{3,-2})=-S^3_{-1}(T_{3,2})=-\left(-1; \frac{1}{2}, \frac{1}{3}, \frac{1}{8}\right)$, which is not elliptic by Theorem~\ref{thmh1order}.
\end{proof}

\begin{table}[ht]
\[\begin{array}{c | cccccccccccc }
p & d  \\
\hline
1 & -2\\
2 & -\frac{7}{4} & -\frac{1}{4} \\
3 & -\frac{1}{6} & -\frac{3}{2} & -\frac{1}{6} \\
7 & \frac{1}{14} & -\frac{3}{14} & -\frac{19}{14} &-\frac{1}{2} &  -\frac{19}{14} & -\frac{3}{14} & \frac{1}{14} \\
9 & 0 & -\frac{10}{9} & -\frac{4}{9} & \frac{2}{9} & 0 &  \frac{2}{9} & -\frac{4}{9} & -\frac{10}{9} & 0\\
\end{array}\]
\caption{The correction terms for $Y$ for $p= |H_1(Y)| < 10$. (See Table~\ref{tableSFScorrterms} for $p=4$ and $8$.) $Y$ is given the canonical orientation as in Theorem~\ref{thmh1order}. }\label{tableSFScorrterms10}
\end{table}

\begin{proof}[Proof of Theorem~\ref{thmz2}]
Assume $Y$ is Seifert-fibered but not a lens space, and $Y=S^3_{p/q}(K)$ where $p/q>0$. In general, if $S^3_{p/q}(K)$ is an elliptic space, then $S^3_{-p/q}(mK)$ (where $mK$ is the mirror) is elliptic, too, but $S^3_{p/q}(mK)$ and $S^3_{-p/q}(K)$ are not; see Corollary~\ref{thmamphichiral}. 

{$\bf|H_1(Y)|=2$:}  $Y=\pm\left(-1; \frac{1}{2}, \frac{1}{3}, \frac{1}{4}\right)$ and
\[\frac{2}{q} \geq 2g(K)-1,\]
so $p/q=2$ with $g(K)=1$ and $\Delta_K(T)=T-1+T^{-1},$ and $K$ must be a trefoil. By Corollary~\ref{thmIOTaresurgeries}, $S^3_{2}(T_{3,2})=\left(-1; \frac{1}{2}, \frac{1}{3}, \frac{1}{4}\right)$.

$\bf|H_1(Y)|=3$:  $Y=\pm\left(-1; \frac{1}{2}, \frac{1}{3}, \frac{1}{3}\right)$. Either $p/q=3/2$ with $g(K)=1$ and $\Delta_K(T)=T-1+T^{-1}$ and $K$ the trefoil, or else $p/q=3$ with $0<g(K)\leq2$. In the latter case, Theorem~\ref{thmalex} shows that the symmetrized Alexander polynomial of $K$ may be $\Delta_1(T), \Delta_2(T),$ or $\Delta_{2'} (T)$ (see Table~\ref{tablealexpoly} for a number of the Alexander polynomials we will use).

To narrow this down, calculate the corresponding correction terms that would result from $+3$-surgery on knots with these Alexander polynomials. By Theorem~\ref{thmd}:
\begin{multline*}
d(S^3_3(K),i) = d(S^3_3(U),i)  -2\sum_{j=1}^{\infty} ja_{j+c}\\
= \left\{\begin{array}{cl} 1/2 & i = 0\\ -1/6 & i=\pm 1 \end{array}\right.
+\left\{\begin{array}{rrrl}
\phantom{-}\Delta_1 & \phantom{-}\Delta_{2} & \phantom{-}\Delta_{2'} \\
-2 & -2 & -4 & i=0\\
\phantom{-}0 & -2 & -2 & i=\pm1\\
\end{array}\right.\\
= \left\{\begin{array}{cccl}
\phantom{-}\Delta_1 &\phantom{-} \Delta_{2} & \phantom{-}\Delta_{2'} & \\
-3/2 & -3/2 & -7/2 & i = 0\\
-1/6 & -13/6 & -13/6 & i = \pm1\\
\end{array}\right.
\end{multline*}
On the other hand, $d(Y,\spincstr)$ may be calculated as in Example~\ref{ex4}; $-Y=\left(-2; \frac{1}{2}, \frac{2}{3}, \frac{2}{3}\right)$ has $e(-Y)<0$, and $\Gamma$ is
\[\xymatrix{
 &  \stackrel{-2} {\bullet}\\
 \stackrel{-2} {\bullet} \ar@{-}[ur]\ar@{-}[r] \ar@{-}[rd]&   \stackrel{-2} {\bullet}\ar@{-}[r] &   \stackrel{-2} {\bullet}\\
 &  \stackrel{-2} {\bullet}\ar@{-}[r]  &   \stackrel{-2} {\bullet}
 }\]
where $\mathfrak{v}_1$ is degree $3$ vertex on the left, $\mathfrak{v}_2$ is on the top arm, $\mathfrak{v}_3, \mathfrak{v}_4$ are on the middle arm, and $\mathfrak{v}_5, \mathfrak{v}_6$ are on the bottom arm.

There are three nice full paths, starting with the vectors (written in the form $v^TQ$) 
\[\begin{split}(0,0,0,0,0,0)\\ (0,0,0,2,0,0)\\ (0,0,0,0,0,2)\end{split}\] with squares \[0,~ \frac{16}{3},~ \frac{16}{3},\] so the correction terms $d(Y(\Gamma),\spincstr)$ are, in some order, \[-\frac{3}{2}, ~-\frac{1}{6}, ~-\frac{1}{6}\] 
These terms do not match the correction terms coming from surgery on a knot with Alexander polynomial $\Delta_{2}(T)$ or $\Delta_{2'}(T)$, so the Alexander polynomial can only be $\Delta_1(T)$, and $K$ is a trefoil. Note $S^3_{3}(T_{3,2})=\left(-1; \frac{1}{2}, \frac{1}{3}, \frac{1}{3}\right)$.

\begin{table}\[\begin{array}{l}
\Delta_{1\phantom{'}}(T)  =  T-1 \cdots \\
\Delta_{2\phantom{'}}(T) = T^2-T+1 \cdots \\
\Delta_{2'}(T) = T^2-1 \cdots \\
\Delta_{3\phantom{'}}(T) = T^3-T^2+T-1 \cdots \\
\Delta_{4\phantom{'}}(T) = T^4-T^3+T^2-T+1 \cdots \\
\Delta_{4'}(T) = T^4 -T^3+T-1 \cdots \\
\Delta_{5\phantom{'}}(T) = T^5-T^4+T^3-T^2+T-1 \cdots \\
\Delta_{6\phantom{'}}(T) = T^6-T^5+T^4 -T^3+T^2-T+1 \cdots \\
\Delta_{7\phantom{'}}(T) = T^7-T^6+T^5-T^4+T^3-T^2+T-1 \cdots \\
\Delta_{8\phantom{'}}(T) = T^8-T^7+T^6-T^5+T^4 -T^3+T^2-T+1 \cdots \\
\Delta_{8'}(T) = T^8-T^7+T^5-T^4+T^2-T+1 \cdots \\
\Delta_{9'}(T) = T^9-T^8+T^5-T^4+T^3-T^2+1 \cdots \\
\end{array}\]\caption{The Alexander polynomials $\Delta_i(T)$.}\label{tablealexpoly}\end{table}

{$\bf|H_1(Y)|=4$:} $Y=\pm\left(-1; \frac{1}{2}, \frac{1}{2}, \frac{1}{n}\right)$ with odd $n$. Either $p/q=4/3$ with $g(K)=1$ and $K$ the trefoil (but this $Y$ is not elliptic),\footnote{It was previously known that there is no finite $4/3$-surgery: finite surgery on a hyperbolic knot must be integral or half-integral~\cite[Theorems~1.1,~1.2]{boyerzhangii}, and no $\pm4/3$-surgery on a torus or satellite knot gives this $Y$~\cite[Theorem~7]{bleilerhodgson2}.}  or $p/q=4$ with $g(K)=1$ or $2$ and Alexander polynomial $\Delta_{1} (T),\Delta_{2} (T),$ or $\Delta_{2'} (T)$. In the latter case,
\[d(S^3_4(K),i) 
= \left\{\begin{array}{cccl}
\phantom{-}\Delta_1 &\phantom{-} \Delta_{2} & \phantom{-}\Delta_{2'} & \\
-5/4 & -5/4 & -13/4 & i = 0\\
\phantom{-}0 & -2 & -2 & i = \pm1\\
-1/4 & -1/4 & -1/4 & i = 2\end{array}\right.\]
These terms match the $d(Y,\spincstr)$ calculated in Example~\ref{ex4} only for $\Delta_1(T)$ and $n=3$, which is $S^3_4(T_{3,2}) = \left( -1; \frac{1}{2}, \frac{1}{2}, \frac{1}{3} \right).$

{$\bf|H_1(Y)|=5~or~6$:} All elliptic $Y$ with these first homologies are lens spaces.

{$\bf|H_1(Y)|=7$:}  $Y=\pm\left(-1; \frac{1}{2}, \frac{1}{3}, \frac{2}{5}\right)$ with $d(Y,\spincstr)$ as in Table~\ref{tableSFScorrterms10}. By Boyer-Zhang, elliptic hyperbolic surgeries must be integral or half-integral \cite[Theorem 1.2]{boyerzhangii}, so it may be $p/q=7/2$ with $g(K)\leq 2$ and Alexander polynomial $\Delta_{1} (T),\Delta_{2} (T),$ or $\Delta_{2'} (T)$, or $p/q=7$ with $g(K)\leq 4$ and appropriate Alexander polynomial. Of the 18 possible sets of $d(S^3_{7/q}(K),\spincstr)$, the only ones that match $d(Y,\spincstr)$ are $7/2$-surgery and $\Delta_1(T)$ (which must be the trefoil) and $7/1$-surgery and $\Delta_{2}(T)$ (which could be $T_{5,2}$). 
Finally, $S^3_{7}(T_{5,2})=S^3_{7/2}(T_{3,2})=\left(-1; \frac{1}{2}, \frac{1}{3}, \frac{2}{5}\right)$.

{$\bf|H_1(Y)|=8$:} $Y= \pm\left(-1; \frac{1}{2}, \frac{1}{2}, \frac{2}{n}\right)$ with $n$ odd. The $d(Y,\spincstr)$ are listed in Table~\ref{tableSFScorrterms}. If $p/q=\pm8$, $g(K)\leq 4$. There are only two possible choices of $\Delta_K(T)$ and $n$ that give the same correction terms; for these cases, $d(S^3_8(K),\spincstr)$, $\Delta_i(T)$, and $n$ are listed in Table~\ref{tablesurgcorrterms}. 

{$\bf|H_1(Y)|=9$:} $Y= \left(-1; \frac{1}{2}, \frac{1}{3}, \frac{2}{3}\right)$ and the correction terms are given in Table~\ref{tableSFScorrterms10}. If $p/q=9$, then $g(K)\leq 5$; if $p/q=9/2$, then $g(K)\leq 2$. Comparing the correction terms shows that the Alexander polynomial must be $\Delta_1(T)$ when $q=1$, so $K$ is the trefoil. 
\end{proof}

\begin{sidewaystable}\centering
\[\begin{array}{cc rrrrrrrrr ccccccccccc }
p & k &  & & &&&&& & d & & \\ 
\hline
4 & 1 & &&&&&&& 0 & \frac{2-n}{4}^*& \frac{-2-n}{4}^*  \\
8 & 1  &&&&&&& \frac{1}{4} & -\frac{1}{4} & \frac{4-n}{8} & \frac{-4-n}{8}  \\
12  & 1 &&&&&& \frac{1}{2} & -\frac{1}{6} & -\frac{1}{6} & \frac{8-n}{12} & \frac{-4-n}{12}   & \frac{4-n}{12}^* & \frac{-8-n}{12}^* \\
16  & 1  &&&&& 0 & 0 & \frac{3}{4} & -\frac{1}{4} & \frac{14-n}{16} & \frac{-2-n}{16}  & \frac{6-n}{16} & \frac{-10-n}{16} \\
20  & 1 & &&& 1 &- \frac{1}{5} &- \frac{1}{5} & \frac{1}{5} & \frac{1}{5}& \frac{2-n}{20} & \frac{22-n}{20}  & \frac{10-n}{20} & \frac{-10-n}{20} &\frac{6-n}{20}^* & \frac{-14-n}{20}^* \\
20  & 2 &&&& 0 & -\frac{2}{5} & - \frac{2}{5}  & \frac{2}{5} & \frac{2}{5}   & \frac{10-n}{20} & \frac{-10-n}{20} & \frac{14-n}{20} & \frac{-6-n}{20}& \frac{2-n}{20}^*& \frac{-18-n}{20}^* \\
24  & 1  &&& \frac{5}{4} &- \frac{1}{4} & -\frac{1}{12} & -\frac{1}{12} &\frac{5}{12}&\frac{5}{12}&\frac{16-n}{24}& \frac{-16-n}{24} & \frac{8-n}{24} & \frac{-8-n}{24} & \frac{8-n}{24} & \frac{32-n}{24}  \\
28  & 1 && \frac{3}{2}  & -\frac{3}{14} & -\frac{3}{14}&  \frac{1}{14} & \frac{1}{14} &\frac{9}{14} & \frac{9}{14}  & \frac{44-n}{28} & \frac{24-n}{28}  & \frac{16-n}{28}  & \frac{-16-n}{28}  & \frac{12-n}{28} & \frac{-4-n}{28}& \frac{8-n}{28}^*& \frac{-20-n}{28}^*\\
28  & 2 &&  -\frac{1}{2}&-\frac{3}{14}& -\frac{3}{14} & \frac{1}{14} & \frac{1}{14}& \frac{9}{14} & \frac{9}{14}   & \frac{4-n}{28} & \frac{-4-n}{28}  & \frac{20-n}{28} & \frac{-8-n}{28} & \frac{24-n}{28} & \frac{-24-n}{28} & \frac{-12-n}{28}^*  & \frac{16-n}{28}^*\\
28  & 3 & &  \frac{1}{2} &\frac{3}{14} & \frac{3}{14}  & -\frac{1}{14}  & -\frac{1}{14} & -\frac{9}{14} & -\frac{9}{14} & \frac{8-n}{28} & \frac{-8-n}{28}  & \frac{20-n}{28} & \frac{-20-n}{28} & \frac{16-n}{28} & \frac{-12-n}{28} & \frac{-4-n}{28}^*& \frac{-32-n}{28}^*\\
32  & 1 &-\frac{7}{4}&  \frac{1}{4} & -\frac{1}{4}& -\frac{1}{4}  &\frac{7}{8} & \frac{7}{8}& -\frac{1}{8}&  -\frac{1}{8}& \frac{26-n}{32} & \frac{58-n}{32} & \frac{2-n}{32} & \frac{34-n}{32} & \frac{18-n}{32} & \frac{-14-n}{32} & \frac{-22-n}{32} & \frac{10-n}{32} \\
32  & 3 & \frac{1}{4} & \frac{1}{4}  & -\frac{1}{4} & -\frac{1}{4} & \frac{5}{8} & \frac{5}{8} &-\frac{3}{8} &-\frac{3}{8} &  \frac{22-n}{32} & \frac{-10-n}{32} & \frac{30-n}{32} & \frac{-2-n}{32} & \frac{6-n}{32} & \frac{-26-n}{32}& \frac{-18-n}{32} & \frac{14-n}{32}
\end{array}\]
\caption{
The correction terms of $Y=\left(-1;\frac{1}{2},\frac{1}{2},\frac{m}{n}\right)$ with $|H_1(Y)|=p\leq 32$ where $p=4m$ and $k=n\bmod m$. Note that $d(Y_{m,-n},\spincstr)=-d(Y_{m,n},-\spincstr)=-d(Y_{m,n},\spincstr)$ since $\left(-1;\frac{1}{2},\frac{1}{2},-\frac{m}{n}\right)=-\left(-1;\frac{1}{2},\frac{1}{2},\frac{m}{n}\right)$. The correction terms come in two flavors, those constant for the entire family, which are of the form $\frac{\cdot}{4m}$ (not necessarily reduced), and those dependent on $n$, which are of the form $\frac{n+\cdot}{4m}$. The correction terms marked by $^*$ correspond to a unique $\spincstr\in \spinc(Y)$ (actually, to the $\spincstr\in Spin(Y)$); the others correspond to two $\spincstr$ each.\label{tableSFScorrterms}}\end{sidewaystable}

\begin{sidewaystable}\centering
\[\begin{array}{rrl | rrrrrrrrrrrrrrrrrr}
 p &  n & \Delta_{i} &d & & & & & & & & \\
\hline
 4&{-3}&\Delta_{1} &-\frac{5}{4}^*&0&-\frac{1}{4}^* & & & & & & \\
 8&{ 3}&\Delta_{1} &-\frac{1}{4}^{\phantom{*}}&\frac{7}{8}&\frac{1}{4}&-\frac{1}{8}& & & & & \\
8&{ -5}&\Delta_{2} & -\frac{1}{4}^{\phantom{*}}&-\frac{9}{8}&\frac{1}{4}&-\frac{1}{8}& & & & & \\
 12&{ 5}&\Delta_{2} &\frac{3}{4}^*&-\frac{1}{6}&\frac{13}{12}&\frac{1}{2}&\frac{1}{12}&-\frac{1}{6}&-\frac{1}{4}^*  & & \\
12&{ -7}&\Delta_{3} & -\frac{5}{4}^*&-\frac{1}{6}&-\frac{11}{12}&\frac{1}{2}&\frac{1}{12}&-\frac{1}{6}&-\frac{1}{4}^*  & & \\
16&{ 7}&\Delta_{3} & -\frac{1}{4}^{\phantom{*}}&\frac{13}{16}&0&\frac{21}{16}&\frac{3}{4}&\frac{5}{16}&0&-\frac{3}{16} & \\
 16&{ -9}&\Delta_{4} &-\frac{1}{4}^{\phantom{*}}&-\frac{19}{16}&0&-\frac{11}{16}&\frac{3}{4}&\frac{5}{16}&0&-\frac{3}{16} & \\
20&{ 9}&\Delta_{4} & \frac{3}{4}^*&-\frac{1}{5}&\frac{19}{20}&\frac{1}{5}&\frac{31}{20}&1&\frac{11}{20}&\frac{1}{5}&-\frac{1}{20} & -\frac{1}{5}& -\frac{1}{4}^* & & & & & & & \\
 20&{ -11}&\Delta_{5} &-\frac{5}{4}^*&-\frac{1}{5}&-\frac{21}{20}&\frac{1}{5}&-\frac{9}{20}&1&\frac{11}{20}&\frac{1}{5}&-\frac{1}{20} & -\frac{1}{5}&-\frac{1}{4}^*  & & & & & & &\\
 24&{11}&\Delta_{5} &-\frac{1}{4}^{\phantom{*}}&\frac{19}{24} &-\frac{1}{12}&\frac{9}{8}&\frac{5}{12}&\frac{43}{24}&\frac{5}{4}&\frac{19}{24}&\frac{5}{12} &\frac{1}{8}& -\frac{1}{12} &-\frac{5}{24}& & & & & & \\
 24&{-13}&\Delta_{6} &-\frac{1}{4}^{\phantom{*}}&-\frac{29}{24}&-\frac{1}{12}&-\frac{7}{8}&\frac{5}{12}&-\frac{5}{24}&\frac{5}{4}&\frac{19}{24}&\frac{5}{12} &\frac{1}{8}& -\frac{1}{12} &-\frac{5}{24}& & & & & & \\
 28&{ -5}&\Delta_{8'} &\frac{ 3}{4}^*&-\frac{3}{14}&\frac{25}{28}&\frac{1}{14}&-\frac{19}{28}&\frac{9}{14}&\frac{1}{28}&-\frac{1}{2}&\frac{29}{28} & \frac{9}{14}& \frac{9}{28} &\frac{1}{14}&-\frac{3}{28}&-\frac{3}{14}&-\frac{1}{4}^*  & & & \\
 28&{-11}&\Delta_{9'} &\frac{3}{4}^*&-\frac{3}{14}&-\frac{31}{28}&\frac{1}{14}&-\frac{19}{28}&\frac{9}{14}&\frac{1}{28}&-\frac{1}{2}&-\frac{27}{28} &\frac{9}{14}& \frac{9}{28} &\frac{1}{14}&-\frac{3}{28}&-\frac{3}{14}&-\frac{1}{4}^* & & & \\
28&{ 13}&\Delta_{6} & \frac{ 3}{4}^*&-\frac{3}{14}&\frac{25}{28}&\frac{1}{14}&\frac{37}{28}&\frac{9}{14}&\frac{57}{28}&\frac{3}{2}&\frac{29}{28} &\frac{9}{14}& \frac{9}{28} &\frac{1}{14}&-\frac{3}{28}&-\frac{3}{14}&-\frac{1}{4} ^*& & & \\
 28&{-15}&\Delta_{7} &-\frac{5}{4}^*&-\frac{3}{14}&-\frac{31}{28}&\frac{1}{14}&-\frac{19}{28}&\frac{9}{14}&\frac{1}{28}&\frac{3}{2}&\frac{29}{28} &\frac{9}{14}& \frac{9}{28} &\frac{1}{14}&-\frac{3}{28}&-\frac{3}{14}&-\frac{1}{4}^* & & & \\
 32&{15}&\Delta_{7} &-\frac{1}{4}^{\phantom{*}}&\frac{25}{32}&-\frac{1}{8}&\frac{33}{32}&\frac{1}{4}&\frac{49}{32}&\frac{7}{8}&\frac{73}{32}&\frac{7}{4} &\frac{41}{32}& \frac{7}{8} &\frac{17}{32}&\frac{1}{4}&\frac{1}{32}&-\frac{1}{8}&-\frac{7}{32}& & \\
 32&{-17}&\Delta_{8} &-\frac{1}{4}^{\phantom{*}}&-\frac{39}{32}&-\frac{1}{8}&-\frac{31}{32}&\frac{1}{4}&-\frac{15}{32}&\frac{7}{8}&\frac{9}{32}&\frac{7}{4} &\frac{41}{32}& \frac{7}{8} &\frac{17}{32}&\frac{1}{4}&\frac{1}{32}&-\frac{1}{8}&-\frac{7}{32}& &\\
\end{array}\]
\caption{All possible cases for $p\leq 32$ where $Y=\left(-1;\frac{1}{2},\frac{1}{2},\frac{m}{n}\right)$ has the same correction terms as some $S^3_p(K)$, assuming the latter exists. Also listed is $n$ and the Alexander polynomial $\Delta_i(T)$ from Table~\ref{tablealexpoly}. Note that $p=4m=|H_1(Y)|$. The correction terms marked by $^*$ correspond to a unique $\spincstr\in \spinc(Y)$ (actually, to the $\spincstr\in Spin(Y)$); the others occur for two $\spincstr$ each.}\label{tablesurgcorrterms}
\end{sidewaystable}

\begin{proof}[Proof of Theorem~\ref{thmz32}] The calculations were performed by computer and are similar to the calculations for $p=8$. We summarize the results:

For each choice of $|H_1(Y)|=4m$, Theorem~\ref{thmh1order} gives a description like $(-1; \frac{1}{2},\frac{1}{2},\frac{m}{n})$ for all possible $Y$. The correction terms for each such manifold are listed in Table~\ref{tableSFScorrterms}. On the other hand, assuming $Y$ is a knot surgery, Theorem~\ref{thmsurgcoeffform} restricts the surgery coefficients that can give $Y$, and Theorems~\ref{thmalex} and~\ref{thmfibered} restrict the Alexander polynomial of the knot (of which there are slightly less than $2^x$ for $x=\frac{p}{2q}$). Using Theorem~\ref{thmd}, it is possible to calculate the correction terms of the resulting surgeries for each knot (assuming they are indeed L-spaces). Table~\ref{tablesurgcorrterms} lists the cases where the correction terms for surgeries on the knots with the given Alexander polynomials from Table~\ref{tablealexpoly} match the correction terms for the appropriate elliptic manifolds.

For uniqueness of the first set of cases, note that these manifolds do not appear in the list obtainable by $p/1$ surgery and so are not surgery on hyperbolic $K$. They are also not on the list of finite satellite surgeries from \cite{bleilerhodgson2} and are not obtainable from surgery on any other torus knots by Corollary~\ref{thmmoser}. 
\end{proof}

\section{Conjectures}\label{sectconj}

We have applied the correction term to obstruct a manifold being surgery on a knot, and it was a sufficient obstruction in all but one case studied, where it was inconclusive; that manifold is realized by a non-integral surgery on a torus knot. On the basis of this evidence, we feel compelled to state the following conjecture, although we do not have any deeper intuition about why it would be true.

\begin{conj}
The Heegaard Floer correction terms $d(Y,\spincstr)$ are sufficient to distinguish which finite manifolds are surgeries on knots in $S^3$.
\end{conj}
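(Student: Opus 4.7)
The plan is to obstruct knot surgeries giving each candidate dihedral elliptic manifold by computing the Heegaard Floer correction terms on two sides and then matching them. First I would enumerate the target manifolds: by Theorem~\ref{thmh1order}, every dihedral elliptic space with $|H_1(Y)|=4m$ for $m\leq 8$ has a canonical form $Y=\left(-1;\frac{1}{2},\frac{1}{2},\frac{m}{n}\right)$, indexed by the integer $n$ subject to a coprimality condition depending on parity. For each such $m$ and each valid $n$ I would apply the plumbing-graph algorithm of Section~\ref{sectionplumbing}, in the manner of Example~\ref{ex4}, to compute $d(Y,\spincstr)$ for every torsion $\spinc$-structure. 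The structural observation to exploit is that these correction terms split into two flavors: those that are constant in $n$ (arising from nice full paths confined to the two multiplicity-$2$ arms) and those that vary linearly in $n$ (arising from paths extending along the third arm). This yields the closed-form entries recorded in Table~\ref{tableSFScorrterms}.

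On the knot-surgery side, suppose $Y=S^3_{p/q}(K)$ is an $L$-space surgery. Then $p=4m\leq 32$; Theorem~\ref{thmsurgcoeffform} forces $p/q\geq 2g(K)-1$, bounding the genus; and by Boyer--Zhang $p/q$ is integral or half-integral once one has handled torus and satellite cases using Theorem~\ref{thmmoser} and Corollary~\ref{thmIOTaresurgeries}. For each surviving pair $(p/q,g)$ I would enumerate all symmetrized monic Alexander polynomials with alternating $\pm 1$ non-zero coefficients permitted by Theorem~\ref{thmalex} (at most $2^g$ of them) and then compute $d(S^3_{p/q}(K),i)$ using Theorem~\ref{thmd}. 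A direct comparison with the target $d(Y,\spincstr)$ isolates the surviving pairs, populating Table~\ref{tablesurgcorrterms}. A useful finiteness reduction is that the $n$-dependent terms of $d(Y,\spincstr)$ are unbounded as $n\to\infty$, whereas knot-surgery correction terms are bounded for fixed $p$ and $g$; therefore only finitely many $n$ per $m$ actually need be checked, making the enumeration finite and mechanisable.

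For the uniqueness half of the theorem, after realizing each manifold by an explicit torus knot surgery via the formulas in the proof of Corollary~\ref{thmIOTaresurgeries}, I would rule out other surgeries in three steps: no other Alexander polynomial survives the correction-term comparison; no other torus knot fits, since Moser's classification pins down the multiplicities $(r,s,|rsq-p|)$; and no satellite knots appear in the Bleiler--Hodgson list for these parameters. For the second list, the correction terms are consistent with a hyperbolic knot of the recorded Alexander polynomial, so $d$-invariants alone cannot eliminate them. The hard part will be the bookkeeping: the number of candidate Alexander polynomials and the number of $\spinc$-structures both grow with $p$, so careful (inevitably computer-assisted) management is needed to certify that every $n$ in every infinite family has been treated and that the finitely many coincidences of $d$-values are exactly those tabulated. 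This combinatorial sweep is the natural place for a subtle case to slip through, and is where I expect the main difficulty to lie.
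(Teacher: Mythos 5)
The statement you are trying to prove is a \emph{conjecture}, and the paper offers no proof of it; what you have written is essentially a reconstruction of the computational strategy behind Theorems~\ref{thmz2} and~\ref{thmz32}, which supply \emph{evidence} for the conjecture but do not establish it. The gap is structural, not a matter of bookkeeping. The conjecture asserts sufficiency: whenever the correction terms of a finite manifold $Y$ are consistent with those of some hypothetical $L$-space surgery, $Y$ actually \emph{is} a knot surgery. Your entire argument runs in the obstruction direction --- enumerate candidate Alexander polynomials, compute the would-be $d(S^3_{p/q}(K),i)$ via Theorem~\ref{thmd}, and discard manifolds with no match. No amount of such computation can prove sufficiency; for that you must \emph{realize} every surviving candidate by an explicit knot. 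The paper itself exhibits the failure point: $\left(-1;\frac{1}{2},\frac{1}{2},\frac{8}{9}\right)$ survives the correction-term comparison (it appears with a ``?'' in Theorem~\ref{thmz32} and a matching Alexander polynomial in Table~\ref{tablesurgcorrterms}), yet the author knows of no knot surgery producing it. If none exists, the conjecture is false; either way your method cannot decide it.

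A second, smaller gap: even if realization were handled, your sweep covers only the dihedral families with $|H_1(Y)|=4m\le 32$, while the conjecture quantifies over \emph{all} finite manifolds --- infinitely many families $m$, each infinite in $n$. Your finiteness reduction (the $n$-dependent terms $-\frac{n+c}{4m}$ are unbounded as $n\to\infty$ while surgery correction terms are bounded for fixed $p=4m$ and bounded genus) is a sound observation, but it is the mechanism behind Conjecture~\ref{conjfinite}, not this one: it shows only finitely many $n$ per $m$ can \emph{pass} the test, not that every manifold that passes is realized. To honestly present your work, you should frame it as a verification of the conjecture in the range $|H_1(Y)|\le 32$ modulo the single unresolved case, rather than as a proof.
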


A careful examination of Theorem~\ref{thmz32} also suggests a more specific conjecture: all known examples of $\left(-1; \frac{1}{2}, \frac{1}{2}, \frac{m}{n}\right)$ which are knot surgeries obey $n\leq2m+1$ (and the cases $n=2m\pm 1$ are realized by torus knots). Since this paper appeared on the arxiv, the author has proven that each family of dihedral manifolds with a fixed $|H_1(Y)|=4m$ includes finitely many knot surgeries~\cite{doigfiniteii}, but the bounds given there appear to be capable of improvement:

\begin{conj}\label{conjfinite}
If $n>2m+1$, then $\left(-1; \frac{1}{2}, \frac{1}{2}, \frac{m}{n}\right)$ is never a knot surgery.
\end{conj}

Finally, work of Ni and Zhang \cite{nizhangT25} indicates that $7$ and $8$ may be characterizing slopes for $T_{5,2}$ ($p/q$ is a \emph{characterizing slope} for $T_{5,2}$ if $S^3_{p/q}(K) \cong S^3_{p/q}(T_{5,2})$ means $K=T_{5,2}$).

\begin{conj}
The phrase ``With the possible exception of $S^3_7(T_{5,2})$ and $S^3_8(T_{5,2})$'' in Theorem~\ref{thmz2} may be removed. The bound in Corollary~\ref{thmexceptional} may be increased from 7 to 10.
\end{conj}



\bibliographystyle{amsplain}
\bibliography{biblio}

\providecommand{\bysame}{\leavevmode\hbox to3em{\hrulefill}\thinspace}
\providecommand{\MR}{\relax\ifhmode\unskip\space\fi MR }
\providecommand{\MRhref}[2]{%
  \href{http://www.ams.org/mathscinet-getitem?mr=#1}{#2}
}
\providecommand{\href}[2]{#2}
\begin{thebibliography}{10}

\bibitem{bergeconj}
John Berge, \emph{Obtaining lens spaces by surgeries on knots}, unpublished.

\bibitem{bleilerhodgson2}
Steven~A. Bleiler and Craig~D. Hodgson, \emph{Spherical space forms and {D}ehn
  filling}, Topology \textbf{35} (1996), no.~3, 809--833. \MR{MR1396779
  (97f:57007)}

\bibitem{boyerzhang}
S.~Boyer and X.~Zhang, \emph{Finite {D}ehn surgery on knots}, J. Amer. Math.
  Soc. \textbf{9} (1996), no.~4, 1005--1050. \MR{1333293 (97h:57013)}

\bibitem{boyerzhangii}
Steven Boyer and Xingru Zhang, \emph{A proof of the finite filling conjecture},
  J. Differential Geom. \textbf{59} (2001), no.~1, 87--176. \MR{1909249
  (2003k:57007)}

\bibitem{burdezieschang}
Gerhard Burde and Heiner Zieschang, \emph{Knots}, de Gruyter Studies in
  Mathematics, vol.~5, Walter de Gruyter \& Co., Berlin, 1985. \MR{MR808776
  (87b:57004)}

\bibitem{culetal}
Marc Culler, C.~McA. Gordon, J.~Luecke, and Peter~B. Shalen, \emph{Dehn surgery
  on knots}, Ann. of Math. (2) \textbf{125} (1987), no.~2, 237--300. \MR{881270
  (88a:57026)}

\bibitem{dean}
John~C. Dean, \emph{Small {S}eifert-fibered {D}ehn surgery on hyperbolic
  knots}, Algebr. Geom. Topol. \textbf{3} (2003), 435--472 (electronic).
  \MR{MR1997325 (2004m:57009)}

\bibitem{dmm}
Arnaud Deruelle, Katura Miyazaki, and Kimihiko Motegi, \emph{Networking
  {S}eifert surgeries on knots}, Mem. Amer. Math. Soc. \textbf{217} (2012),
  no.~1021, viii--130. \MR{2951665}

\bibitem{doigfiniteii}
Margaret Doig, \emph{On the number of finite p/q-surgeries}, arXiv:1302.6130.

\bibitem{doigthesis}
\bysame, \emph{Spherical {S}eifert fibered spaces, knot surgeries, and
  {H}eegaard {F}loer homology}, Ph.D. thesis, Princeton University, 2010.

\bibitem{fintushelsternlens}
Ronald Fintushel and Ronald Stern, \emph{Constructing lens spaces by surgery on
  knots}, Math. Z. \textbf{175} (1980), no.~1, 33--51.

\bibitem{mattmanetalpretzel}
David Futer, Masaharu Ishikawa, Yuichi Kabaya, Thomas~W. Mattman, and Koya
  Shimokawa, \emph{Finite surgeries on three-tangle pretzel knots}, Algebr.
  Geom. Topol. \textbf{9} (2009), no.~2, 743--771. \MR{2496889 (2010a:57008)}

\bibitem{gabaifoliations}
David Gabai, \emph{Foliations and the topology of {$3$}-manifolds. {III}}, J.
  Differential Geom. \textbf{26} (1987), no.~3, 479--536. \MR{MR910018
  (89a:57014b)}

\bibitem{ghigginigenusonefibered}
Paolo Ghiggini, \emph{Knot {F}loer homology detects genus-one fibred knots},
  Amer. J. Math. \textbf{130} (2008), no.~5, 1151--1169. \MR{2450204
  (2010f:57013)}

\bibitem{gompfstipsicz}
Robert~E. Gompf and Andr{\'a}s~I. Stipsicz, \emph{{$4$}-manifolds and {K}irby
  calculus}, Graduate Studies in Mathematics, vol.~20, American Mathematical
  Society, Providence, RI, 1999. \MR{MR1707327 (2000h:57038)}

\bibitem{gordonsatellite}
C.~McA. Gordon, \emph{Dehn surgery and satellite knots}, Trans. Amer. Math.
  Soc. \textbf{275} (1983), no.~2, 687--708. \MR{MR682725 (84d:57003)}

\bibitem{gordonlueckecomplements}
C.~McA. Gordon and J.~Luecke, \emph{Knots are determined by their complements},
  Bull. Amer. Math. Soc. (N.S.) \textbf{20} (1989), no.~1, 83--87. \MR{MR972070
  (90a:57006b)}

\bibitem{greeneberge}
Joshua Greene, \emph{The lens space realization problem}, arXiv:1010.6257.

\bibitem{juhaszsutured}
Andr{\'a}s Juh{\'a}sz, \emph{Holomorphic discs and sutured manifolds}, Algebr.
  Geom. Topol. \textbf{6} (2006), 1429--1457 (electronic). \MR{MR2253454
  (2007g:57024)}

\bibitem{kirby}
Rob Kirby, \emph{Problems in low dimensional manifold theory}, Algebraic and
  geometric topology ({P}roc. {S}ympos. {P}ure {M}ath., {S}tanford {U}niv.,
  {S}tanford, {C}alif., 1976), {P}art 2, Proc. Sympos. Pure Math., XXXII, Amer.
  Math. Soc., Providence, R.I., 1978, pp.~273--312. \MR{520548 (80g:57002)}

\bibitem{kirbycalculus}
Robion Kirby, \emph{A calculus for framed links in {$S^{3}$}}, Invent. Math.
  \textbf{45} (1978), no.~1, 35--56. \MR{MR0467753 (57 \#7605)}

\bibitem{lickorish}
W.~B.~R. Lickorish, \emph{A representation of orientable combinatorial
  {$3$}-manifolds}, Ann. of Math. (2) \textbf{76} (1962), 531--540.
  \MR{MR0151948 (27 \#1929)}

\bibitem{mattmanpretzelfinite}
Thomas Mattman, \emph{Cyclic and finite surgeries on pretzel knots}, J. Knot
  Theory Ramifications \textbf{11} (2002), no.~6.

\bibitem{mmm}
Thomas Mattman, Katura Miyazaki, and Kimihiko Motegi, \emph{Seifert-fibered
  surgeries which do not arise from primitive/{S}eifert-fibered constructions},
  Trans. Amer. Math. Soc. \textbf{358} (2006), no.~9, 4045--4055 (electronic).
  \MR{MR2219009 (2007h:57009)}

\bibitem{moser}
Louise Moser, \emph{Elementary surgery along a torus knot}, Pacific J. Math.
  \textbf{38} (1971), 737--745. \MR{MR0383406 (52 \#4287)}

\bibitem{neumannplumbing}
Walter~D. Neumann, \emph{A calculus for plumbing applied to the topology of
  complex surface singularities and degenerating complex curves}, Trans. Amer.
  Math. Soc. \textbf{268} (1981), no.~2, 299--344.

\bibitem{nifibered}
Yi~Ni, \emph{Knot {F}loer homology detects fibred knots}, Invent. Math.
  \textbf{170} (2007), no.~3, 577--608.

\bibitem{nizhangT25}
Yi~Ni and Xingru Zhang, \emph{Towards a {D}ehn surgery characterization of
  {T}(5,2)}, arXiv:1206.5577.

\bibitem{ozszabsolute}
Peter Ozsv{\'a}th and Zolt{\'a}n Szab{\'o}, \emph{Absolutely graded {F}loer
  homologies and intersection forms for four-manifolds with boundary}, Adv.
  Math. \textbf{173} (2003), no.~2, 179--261. \MR{MR1957829 (2003m:57066)}

\bibitem{ozszplumbed}
\bysame, \emph{On the {F}loer homology of plumbed three-manifolds}, Geom.
  Topol. \textbf{7} (2003), 185--224 (electronic). \MR{MR1988284 (2004h:57039)}

\bibitem{ozszgenus}
\bysame, \emph{Holomorphic disks and genus bounds}, Geom. Topol. \textbf{8}
  (2004), 311--334 (electronic). \MR{MR2023281 (2004m:57024)}

\bibitem{ozsz2}
\bysame, \emph{Holomorphic disks and three-manifold invariants: properties and
  applications}, Ann. of Math. (2) \textbf{159} (2004), no.~3, 1159--1245.
  \MR{MR2113020 (2006b:57017)}

\bibitem{ozsz1}
\bysame, \emph{Holomorphic disks and topological invariants for closed
  three-manifolds}, Ann. of Math. (2) \textbf{159} (2004), no.~3, 1027--1158.
  \MR{MR2113019 (2006b:57016)}

\bibitem{ozszlens}
\bysame, \emph{On knot {F}loer homology and lens space surgeries}, Topology
  \textbf{44} (2005), no.~6, 1281--1300. \MR{MR2168576 (2006f:57034)}

\bibitem{ozszsmooth}
\bysame, \emph{Holomorphic triangles and invariants for smooth four-manifolds},
  Adv. Math. \textbf{202} (2006), no.~2, 326--400. \MR{MR2222356 (2007i:57029)}

\bibitem{ozszqsurgery}
\bysame, \emph{Knot {F}loer homology and rational surgeries}, Algebr. Geom.
  Topol. \textbf{11} (2011), no.~1, 1--68. \MR{2764036 (2012h:57056)}

\bibitem{perelman}
Grisha Perelman, \emph{The entropy formula for the {R}icci flow and its
  geometric applications; {R}icci flow with surgery on three-manifolds;
  {F}inite extinction time for the solutions to the {R}icci flow on certain
  three-manifolds}, arXiv:0211159, arXiv:0303109, arXiv:307245.

\bibitem{perutzhandleslides}
Timothy Perutz, \emph{Hamiltonian handleslides for {H}eegaard {F}loer
  homology}, Proceedings of {G}\"okova {G}eometry-{T}opology {C}onference 2007,
  G\"okova Geometry/Topology Conference (GGT), G\"okova, 2008, pp.~15--35.
  \MR{2509747 (2010m:57048)}

\bibitem{scott}
Peter Scott, \emph{The geometries of {$3$}-manifolds}, Bull. London Math. Soc.
  \textbf{15} (1983), no.~5, 401--487. \MR{MR705527 (84m:57009)}

\bibitem{seifert}
Herbert Seifert, \emph{Topologie {D}reidimensionaler {G}efaserter {R}\"aume},
  Acta Math. \textbf{60} (1933), no.~1, 147--238. \MR{MR1555366}

\bibitem{seifertthrelfall}
Herbert Seifert and William Threlfall, \emph{Seifert and {T}hrelfall: a
  textbook of topology}, Pure and Applied Mathematics, vol.~89, Academic Press
  Inc. [Harcourt Brace Jovanovich Publishers], New York, 1980, Translated from
  the German edition of 1934 by Michael A. Goldman, With a preface by Joan S.
  Birman, With ``Topology of $3$-dimensional fibered spaces'' by Seifert,
  Translated from the German by Wolfgang Heil. \MR{MR575168 (82b:55001)}

\bibitem{wallace}
Andrew~H. Wallace, \emph{Modifications and cobounding manifolds. {IV}}, J.
  Math. Mech. \textbf{12} (1963), 445--484. \MR{MR0154297 (27 \#4246)}

\bibitem{zhangpropi}
Xingru Zhang, \emph{On property {I} for knots in {$S^3$}}, Trans. Amer. Math.
  Soc. \textbf{339} (1993), no.~2, 643--657. \MR{1154545 (93m:57011)}

\end{thebibliography}

\end{document}